\newcommand{\ZZ}{\mathbb{Z}}
\newcommand{\RR}{\mathbb{R}}
\newcommand{\TP}{\mathbb{TP}}
\newcommand{\newword}[1]{\textbf{\emph{#1}}}
\newcommand{\M}{\mathcal{M}}
\newcommand{\T}{\mathcal{S}}
\newcommand{\Q}{\mathcal{Q}}
\def\M{\mathcal{M}}
\newtheorem{conj}{Conjecture}[section]
\newtheorem{theorem}[conj]{Theorem}
\newtheorem{proposition}[conj]{Proposition}
\newtheorem{lemma}[conj]{Lemma}
\newtheorem{corollary}[conj]{Corollary}
\newtheorem{question}[conj]{Question}
\theoremstyle{definition}
\newtheorem{definition}[conj]{Definition}
\newtheorem{remark}[conj]{Remark}
\begin{document}
 
\title{Triangulations of $\Delta_{n-1} \times \Delta_{d-1}$ and Tropical Oriented Matroids}
\author{Suho Oh \\
 Applied Mathematics Department \\
 Massachusetts Institute of Technology \\
 \texttt{suho@math.mit.edu} \\
 \\
 Hwanchul Yoo \\
 Mathematics Department \\
 Massachusetts Institute of Technology \\
 \texttt{hcyoo@math.mit.edu} \\ }



\date{}
\maketitle

\abstract{Develin and Sturmfels showed that regular triangulations of $\Delta_{n-1} \times \Delta_{d-1}$ can be thought as tropical polytopes. Tropical oriented matroids were defined by Ardila and Develin, and were conjectured to be in bijection with all subdivisions of $\Delta_{n-1} \times \Delta_{d-1}$. In this paper, we show that any triangulation of $\Delta_{n-1} \times \Delta_{d-1}$ encodes a tropical oriented matroid. We also suggest a new class of combinatorial objects that may describe all subdivisions of a bigger class of polytopes.}


\section{Introduction}

Studying triangulations of product of simplices is a very active field of research and there have been numerous results being tied to many different fields (\cite{Ardila2007495}, \cite{MR1637892},\cite{MR1231196},\cite{0827.14036},\cite{MR2131129},\cite{MR1098809},\cite{MR2013970},\cite{0878.52004},\cite{0958.52017},\cite{MR2181765}).

In \cite{MR2054977}, Develin and Sturmfels showed that regular triangulations can be thought as tropical polytopes. Tropical polytopes are essentially tropical hyperplane arrangements. Ardila and Develin defined tropical oriented matroids, that generalize tropical hyperplane arrangements \cite{2007arXiv0706.2920A}. And they conjectured that tropical oriented matroids are essentially the same as subdivisions of product of simplices. In oriented matroid theory, it is a very well known result that realizable oriented matroids come from hyperplane arrangements and oriented matroids in general come from pseudo-sphere arrangements. They showed that a tropical oriented matroid encodes a subdivision. They also showed that a triangulation of $\Delta_{n-1} \times \Delta_2$ enocodes a tropical oriented matroid. In this paper, we provide a strong evidence for the conjecture, by showing that a triangulation of $\Delta_{n-1} \times \Delta_{d-1}$ encodes a tropical oriented matroid.



In section $2$, we go over the basics of triangulations of $\Delta_{n-1} \times \Delta_{d-1}$, fine mixed subdivisions of $n\Delta_{d-1}$ and develop some tools. In section $3$, we go over the definition of tropical oriented matroids. In section $4$, we show that the collection of trees in a fine mixed subdivision of $n\Delta_{d-1}$ satisfies the elimination property. In section $5$, we suggest a new class of objects that may describe all subdivisions of a generalized permutohedra.



\medskip

\textbf{Acknowledgment} We would like to thank Alexander Postnikov, Federico Ardila and C\'esar Ceballos for useful discussions. We would also like to thank Michel Goemans for suggesting Theorem~\ref{thm:int}.

\section{Triangulations of $\Delta_{n-1} \times \Delta_{d-1}$ and Fine Mixed Subdivisions of $n \Delta_{d-1}$}

Each full-dimensional simplex in a triangulation of $\Delta_{n-1} \times \Delta_{d-1}$ can be described by a spanning tree of the bipartite graph $K_{n,d}$. To see this, we label the vertices of $\Delta_{n-1}$ with $[n]$ and vertices of $\Delta_{d-1}$ with $[d]$, then each vertex of $\Delta_{n-1} \times \Delta_{d-1}$ corresponds to an edge of $K_{n,d}$. We will say that in $K_{n,d}$, the vertices corresponding to $\Delta_{n-1}$ are on the left side and the vertices corresponding to $\Delta_{d-1}$ are on the right side. The vertices of each subpolytope in $\Delta_{n-1} \times \Delta_{d-1}$ determine a subgraph of $K_{n,d}$. We use $(A_1,\cdots,A_n)$ where $A_1,\cdots,A_n \subseteq [d]$, to denote a subgraph of $K_{n,d}$ that has edges $(i,j)$ for each $j \in A_i$.

Via the Cayley trick, one can think of a triangulation of $\Delta_{n-1} \times \Delta_{d-1}$ as a fine mixed subdivision of $n \Delta_{d-1}$ \cite{SantosCayley}. We will first go over the basics of fine mixed subdivisions, then state some properties that will be useful for our purpose.
  
\begin{definition}[\cite{Postnikov01012009}]
Let $r$ be the dimension of the Minkowski sum $P_1 + \cdots + P_n$. A \newword{Minkowski cell} in this sum is a polytope $B_1 + \cdots + B_n$ of dimension $r$ where $B_i$ is the convex hull of some subset of vertices of $P_i$. A \newword{mixed subdivision} of the sum is the decomposition into union of Minkowski cells such that intersection of any two cells is their common face. A mixed subdivision is \newword{fine} if there is no refinement possible.
\end{definition}

We define the simplex $\Delta_{d-1}$ as the convex hull of points $(1,0,\cdots,0),(0,1,\cdots,0),\cdots,(0,\cdots,1)$ in $\RR^d$. In this paper, we are only interested in fine mixed subdivisions of $\Delta_{d-1} + \cdots + \Delta_{d-1}$. 

\begin{lemma}[\cite{SantosCayley}]
\label{lem:product}
A mixed subdivision is fine if and only if, for each mixed cell $B = B_1 + \cdots + B_n$ in this subdivision, all $B_i$ are simplices and $\sum dim B_i = dim B$.
\end{lemma}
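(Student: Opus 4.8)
The plan is to route the whole statement through the Cayley trick, which converts it into a small bookkeeping exercise about vertices and dimensions of cells in a polyhedral subdivision. Recall that the Cayley trick gives a poset isomorphism between the mixed subdivisions of $P_1 + \cdots + P_n$ and the polyhedral subdivisions of the Cayley polytope $\mathcal{C} = \mathcal{C}(P_1,\dots,P_n) = \mathrm{conv}\bigl(\bigcup_{i=1}^n \{e_i\}\times P_i\bigr) \subset \RR^n \times \RR^d$, under which a Minkowski cell $B = B_1 + \cdots + B_n$ corresponds to the Cayley cell $\mathcal{C}(B_1,\dots,B_n) = \mathrm{conv}\bigl(\bigcup_i \{e_i\}\times B_i\bigr)$. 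Since this isomorphism respects refinement, a mixed subdivision is fine exactly when the associated subdivision of $\mathcal{C}$ admits no proper refinement, i.e.\ is a triangulation, i.e.\ each of its cells is a simplex. So the lemma reduces to the following local claim: for a cell $B = B_1 + \cdots + B_n$ of a mixed subdivision, the Cayley cell $\mathcal{C}(B_1,\dots,B_n)$ is a simplex if and only if every $B_i$ is a simplex and $\sum_i \dim B_i = \dim B$.

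To prove the local claim, I would first record the two basic facts about Cayley cells. The dimension count: since in a mixed subdivision every summand $B_i$ is nonempty, $\dim \mathcal{C}(B_1,\dots,B_n) = (n-1) + \dim(B_1 + \cdots + B_n) = (n-1) + \dim B$, which one gets by fibering $\mathcal{C}(B_1,\dots,B_n)$ over the simplex $\mathrm{conv}(e_1,\dots,e_n)$. The vertex count: the vertex set of $\mathcal{C}(B_1,\dots,B_n)$ is exactly $\bigcup_i \{e_i\}\times \mathrm{vert}(B_i)$, so it has $\sum_i |\mathrm{vert}(B_i)|$ vertices. Because a polytope is a simplex precisely when its number of vertices is its dimension plus one, $\mathcal{C}(B_1,\dots,B_n)$ is a simplex if and only if $\sum_i |\mathrm{vert}(B_i)| = n + \dim B$.

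Now I would combine this with two elementary inequalities. On one hand $|\mathrm{vert}(B_i)| \ge \dim B_i + 1$ for any polytope, with equality if and only if $B_i$ is a simplex. On the other hand the linear space parallel to $\mathrm{aff}(B_1 + \cdots + B_n)$ is the sum of the linear spaces parallel to the $\mathrm{aff}(B_i)$, so $\dim B = \dim(B_1 + \cdots + B_n) \le \sum_i \dim B_i$. Chaining the two gives $\sum_i |\mathrm{vert}(B_i)| \ge n + \sum_i \dim B_i \ge n + \dim B$, and equality holds all the way through precisely when every $B_i$ is a simplex and $\sum_i \dim B_i = \dim B$. Comparing with the simplex criterion of the previous paragraph proves the local claim, and hence the lemma. (In the case $P_i = \Delta_{d-1}$ that the paper actually uses, each $B_i$ is a face of $\Delta_{d-1}$ and hence automatically a simplex, so only the equation $\sum_i \dim B_i = \dim B$ carries content.)

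I expect the only genuine subtlety, and the step I would write most carefully, to be the first "soft" one: invoking the Cayley trick as an isomorphism of refinement posets so that "fine mixed subdivision" corresponds to "triangulation of $\mathcal{C}$" rather than to some weaker minimality notion, and confirming that in a full mixed subdivision every summand $B_i$ of every cell is nonempty so that the dimension formula for $\mathcal{C}(B_1,\dots,B_n)$ is valid. Both points are standard — the first is precisely the content of \cite{SantosCayley} — but they are where the argument could quietly fail; once they are in place, the dimension and vertex counting is completely forced.
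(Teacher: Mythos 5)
Your proposal is correct; note that the paper does not prove this lemma at all but simply quotes it from \cite{SantosCayley}, so there is no internal proof to diverge from. Your argument---passing through the Cayley trick so that fine mixed subdivisions correspond to triangulations of the Cayley polytope, and then characterizing when a Cayley cell $\mathcal{C}(B_1,\dots,B_n)$ is a simplex via the vertex count $\sum_i |\mathrm{vert}(B_i)|$ against the dimension $(n-1)+\dim B$, with equality forcing each $B_i$ to be a simplex and $\sum_i \dim B_i = \dim B$---is essentially the standard proof underlying the cited result, and the two subtleties you flag (poset isomorphism of refinements, nonemptiness of the summands of full-dimensional cells) are exactly the right ones and are handled correctly.
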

 
The lemma tells us that each fine cell $B_1 + \cdots + B_n$ is isomorphic to the direct product $B_1 \times \cdots \times B_n$ of simplices. Let $I_i$ be the set of vertices of $B_i$. We think of each cell as a subgraph $(I_1,\cdots,I_n)$ and this is a spanning tree \cite{Postnikov01012009}.

\begin{remark}
\label{rem:subdivsurrounding}
The above lemma also tells us that if we take $J_i \subseteq I_i, J_i \not = \emptyset$ for each $i$, then $(J_1,\cdots,J_n)$ encodes a face of this cell. From now on, we will use the subgraph of $K_{n,d}$ and its corresponding face interchangeably. That is, a face $(J_1,\cdots,J_n)$ means a face $\Delta_{J_1} + \cdots + \Delta_{J_n}$.
\end{remark}

To avoid confusion with the tropical oriented matroid terminology, we call the $0$-dimensional faces as \newword{topes}. For two trees $T$ and $T'$ of $K_{n,d}$, let $U(T,T')$ be the directed graph which is the union of edges of $T$ and $T'$ with edges of $T$ oriented from left to right and edges of $T'$ oriented from right to left. A directed \newword{cycle} is a sequence of directed edges $(i_1,i_2),(i_2,i_3),\cdots,(i_{k-1},i_k),(i_k,i_1)$ such that all $i_1,\cdots,i_k$ are distinct. Now we can say exactly which set of spanning trees describes a fine mixed subdivision of $n\Delta_{d-1}$.

\begin{theorem}[\cite{Santos97triangulationsof},\cite{Ardila2007495}]
A collection of subgraphs $T_1,\cdots,T_k$ of $K_{n,d}$ encodes a fine mixed subdivision of $n\Delta_{d-1}$ if and only if:
\begin{enumerate}
\item Each $T_i$ is a spanning tree of $K_{n,d}$.
\item For each $T_i$ and each edge $e$ of $T_i$, either $T_i \setminus e$ has an isolated vertex or there is another $T_j$ containing $T_i \setminus e$.
\item For any pair $i,j$ of $[n]$, there is no cycle in $U(T_i,T_j)$.
\end{enumerate}
\end{theorem}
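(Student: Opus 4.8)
The plan is to go through the Cayley trick, so that the statement becomes a purely combinatorial characterization of which families of full-dimensional simplices assemble into a triangulation of $\Delta_{n-1}\times\Delta_{d-1}$. Label the vertices of $\Delta_{n-1}\times\Delta_{d-1}$ by the edges of $K_{n,d}$, writing $e_i\oplus f_j$ for the vertex corresponding to the edge $(i,j)$. The first step is the classical fact that a set $\{\,e_i\oplus f_j : (i,j)\in G\,\}$ is affinely independent exactly when the bipartite graph $G$ is a forest, and spans a full-dimensional simplex (that is, has $n+d-1$ vertices) exactly when $G$ is a spanning tree; this simultaneously yields condition (1) and identifies the maximal simplices of any triangulation with spanning trees of $K_{n,d}$. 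Under the Cayley trick the faces $(J_1,\dots,J_n)$ of a cell $(I_1,\dots,I_n)$ described in Remark~\ref{rem:subdivsurrounding} match the faces of the simplex $\sigma_T$, a facet of $\sigma_T$ being obtained by deleting a single edge $e$ of $T$; such a facet lies on $\partial(\Delta_{n-1}\times\Delta_{d-1})$ precisely when $T\setminus e$ has an isolated vertex (the corresponding coordinate direction is missing from every part). This makes condition (2) the exact translation of ``every interior facet of the candidate complex is shared with a second maximal cell,'' which holds for any triangulation and, conversely, is what will force the complex to fill the polytope.

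The heart of the matter is compatibility, condition (3). I would isolate the lemma that two spanning trees $T,T'$ give simplices $\sigma_T,\sigma_{T'}$ meeting in a common face if and only if $U(T,T')$ has no cycle (where a cycle uses distinct edges of $K_{n,d}$, so a common edge of $T$ and $T'$ alone is not one). The key input is that the circuits (minimal affine dependences) of the configuration $\{e_i\oplus f_j\}$ are exactly the cycles of $K_{n,d}$ carrying alternating $\pm 1$ coefficients: around a cycle every left vertex $i$ and every right vertex $j$ occurs once with $+1$ and once with $-1$, so the alternating sum of the relevant $e_i\oplus f_j$ vanishes, and a direct check (using that forests are independent) shows there are no other circuits. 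Given this dictionary, $\sigma_T\cap\sigma_{T'}$ fails to be a common face exactly when some point lies in the relative interior of a face of $\sigma_T$ and in the relative interior of a distinct face of $\sigma_{T'}$; a Radon-type argument upgrades such a point into a circuit whose positive support is a subforest of $T$ and whose negative support is a subforest of $T'$ — which is precisely a cycle of $U(T,T')$ whose forward (left-to-right) edges lie in $T$ and backward (right-to-left) edges lie in $T'$. Conversely, every cycle of $U(T,T')$ manufactures such a circuit and hence a common point lying in no common face.

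With these pieces the two implications are routine to assemble. Forward direction: a fine mixed subdivision of $n\Delta_{d-1}$ pulls back to a triangulation of $\Delta_{n-1}\times\Delta_{d-1}$; its maximal cells are spanning trees by Step~1, pairwise intersections are common faces so condition (3) holds by the compatibility lemma, and condition (2) holds because each facet of a maximal simplex $\sigma_{T_i}$ is either on $\partial(\Delta_{n-1}\times\Delta_{d-1})$ — i.e.\ $T_i\setminus e$ has an isolated vertex — or is shared with exactly one other maximal simplex $\sigma_{T_j}$, which then contains $T_i\setminus e$. Converse: assume (1)--(3). By (1) and the compatibility lemma the simplices $\sigma_{T_1},\dots,\sigma_{T_k}$ form a pure geometric simplicial complex $\mathcal{K}$ of dimension $n+d-2$ inside the $(n{+}d{-}2)$-polytope $\Delta_{n-1}\times\Delta_{d-1}$; since any two maximal cells meet in a common face, each facet of $\mathcal{K}$ lies in at most two maximal cells, and by (2) a facet in only one maximal cell lies in $\partial(\Delta_{n-1}\times\Delta_{d-1})$. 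A standard pseudomanifold/connectedness argument then forces $|\mathcal{K}|=\Delta_{n-1}\times\Delta_{d-1}$, so $\mathcal{K}$ is a triangulation, and the Cayley trick returns a fine mixed subdivision of $n\Delta_{d-1}$.

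I expect the real obstacles to be the compatibility lemma of the second paragraph and the covering step of the converse. Converting ``relative interiors of two faces meet'' into the precise combinatorial statement about cycles in $U(T,T')$ requires care with the circuits of the product-of-simplices configuration and with how a face of $\sigma_T$ restricts the support of a circuit; and the assertion that conditions (1)--(3) already force the simplices to tile all of $\Delta_{n-1}\times\Delta_{d-1}$ needs a genuine topological input, namely that no maximal simplex can be dropped and no region left uncovered without producing an interior facet in a single maximal cell, contradicting (2). The spanning-tree bookkeeping and the passage through the Cayley trick, by contrast, are purely formal.
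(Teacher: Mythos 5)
The paper does not prove this theorem --- it is quoted from \cite{Santos97triangulationsof} and \cite{Ardila2007495} --- so there is no internal proof to compare against; your outline follows the same standard route as those references and is correct as a sketch. You assemble the three standard ingredients in the usual way: forests of $K_{n,d}$ as the affinely independent subsets of vertices of $\Delta_{n-1}\times\Delta_{d-1}$ (giving condition (1) and the boundary-facet reading of condition (2)), the circuits of this configuration being alternating cycles of $K_{n,d}$ so that proper intersection of $\sigma_T$ and $\sigma_{T'}$ is equivalent to acyclicity of $U(T,T')$ (with the correct caveat that an edge common to $T$ and $T'$ must not count as a $2$-cycle), and the standard criterion that pairwise proper intersection plus every non-boundary facet being shared with another maximal simplex forces the simplices to cover the polytope, after which the Cayley trick transfers the statement to fine mixed subdivisions of $n\Delta_{d-1}$.
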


Given any subgraph $T$ of $K_{n,d}$, define the \newword{left degree vector (LDV)} $ld(T)=(d_1-1,\cdots,d_n-1)$ where $d_i$ is the degree of the vertex $i \in [n]$ on the left side of $T$. Similarly, define the \newword{right degree vector (RDV)} $ld(T)=(d_1-1,\cdots,d_r-1)$ where $d_i$ is the degree of the vertex $i \in [d]$ on the right side of $T$. The following proposition is a special case of a statement in the proof of Theorem 11.3 in \cite{Postnikov01012009}.

 
\begin{proposition}[\cite{Postnikov01012009}]
\label{prop:Gcell}
Fix a fine mixed subdivision of $n\Delta_{d-1}$. Let $T_1,\cdots,T_s$ be the collection of cells. Then the map ${T_i} \rightarrow ld(T_i)$ is a bijection between fine cells in this subdivision and the set of sequences $(a_1,\cdots,a_n)$ satisfying $\sum a_i = d-1$ and $a_i \geq 0$ for all $i \in [n]$. The same holds for the map $T_i \rightarrow rd(T_i)$.
\end{proposition}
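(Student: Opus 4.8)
(i) The map is well defined; (ii) its domain and codomain are finite of the same size, so it suffices to prove surjectivity; (iii) prove surjectivity by induction on $n+d$ via restriction to boundary faces. For (i): by Lemma~\ref{lem:product} and the remark after it, each fine cell $T_i$ is a spanning tree of $K_{n,d}$, hence has $n+d-1$ edges. Each edge has exactly one left endpoint, so $\sum_{i\in[n]}d_i=n+d-1$, giving $\sum_{i\in[n]}(d_i-1)=d-1$; and each $d_i\ge 1$ since $T_i$ is spanning. Thus $ld(T_i)$ lies in the target set, and the identical computation on the right side handles $rd$.

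\textbf{Reducing to surjectivity.} The number of sequences $(a_1,\dots,a_n)$ with $a_i\ge 0$ and $\sum a_i=d-1$ is $\binom{n+d-2}{n-1}$. On the other hand, the Cayley trick \cite{SantosCayley} identifies the fine cells of our subdivision with the maximal simplices of the corresponding triangulation of $\Delta_{n-1}\times\Delta_{d-1}$, compatibly with the face structure; since the vertex configuration $\{e_i\oplus e_j\}$ of $\Delta_{n-1}\times\Delta_{d-1}$ is totally unimodular, every such simplex is unimodular and their number equals the normalized volume $\binom{n+d-2}{n-1}$. Hence $T_i\mapsto ld(T_i)$ is a map between finite sets of the same size, so it is a bijection as soon as it is surjective; the statement for $rd$ is symmetric.

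\textbf{Surjectivity by induction (the tractable case).} Induct on $n+d$, the cases $n=1$ and $d=1$ being immediate. Fix a target $(a_1,\dots,a_n)$ with $\sum a_i=d-1$, and suppose some coordinate vanishes, say $a_n=0$. The face $\Delta_{[n-1]}\times\Delta_{d-1}$ of $\Delta_{n-1}\times\Delta_{d-1}$ corresponds under the Cayley trick to $(n-1)\Delta_{d-1}$, and our triangulation restricts to a fine mixed subdivision of it; by induction that subdivision has a cell $C''$ with $ld(C'')=(a_1,\dots,a_{n-1})$. Viewed inside $n\Delta_{d-1}$, $C''$ is a codimension-one face lying on the boundary facet where left vertex $n$ is unused, hence a facet of a unique maximal cell $C$; unwinding the Cayley trick shows $C$ is $C''$ with left vertex $n$ joined to a single right vertex, so $ld(C)=(a_1,\dots,a_{n-1},0)$. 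The supporting facts — that the restriction is again fine, and that a codimension-one boundary face lies in exactly one maximal cell, so the restriction map is a bijection on top cells with $a_n=0$ — are routine.

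\textbf{The main obstacle.} What remains is the case $a_i\ge 1$ for all $i$ (which forces $d\ge n+1$). The analogous move lowering $d$ — restricting to the facet $n\Delta_{[d-1]}$, i.e. to the triangulation of $\Delta_{n-1}\times\Delta_{d-2}$ — only identifies the cells $\Delta_{I_1}+\cdots+\Delta_{I_n}$ in which exactly one $I_i$ contains $d$ with the cells of the restricted subdivision, and changes $ld$ by adding a single basis vector $e_{i^{\ast}}$ that one does not get to choose; so it does not realize every all-positive composition. I expect this gap to be closed along the lines of Postnikov's proof of Theorem~11.3 in \cite{Postnikov01012009}: use condition (2) of the theorem above (every interior facet of a cell is shared) together with condition (3) (no directed cycle in $U(T,T')$) to show directly that two distinct cells cannot have the same left-degree vector, which by the cardinality count is precisely the surjectivity we want. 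Concretely, given two cells with equal $ld$, one would take $U(T,T')$ for a minimal such pair, use condition (2) to force a neighbourhood of adjacent cells, and locate within it a directed cycle contradicting condition (3); pinning down exactly where that forbidden cycle appears is the step I expect to be hardest. (In the regular case one can bypass this: a point lying simultaneously on the prescribed codimension-$a_i$ stratum of each of the $n$ tropical hyperplanes exists because the relevant stable intersection is nonempty — but this does not cover non-regular fine mixed subdivisions.)
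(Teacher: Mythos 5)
Your well-definedness computation and the reduction to surjectivity are sound: each fine cell is a spanning tree of $K_{n,d}$, so its left degrees sum to $n+d-1$, and counting the maximal cells of any triangulation of $\Delta_{n-1}\times\Delta_{d-1}$ as the normalized volume $\binom{n+d-2}{n-1}$ (all full-dimensional simplices on this totally unimodular vertex configuration are unimodular) is a legitimate standard fact. The boundary-restriction induction for compositions with some $a_i=0$ is also fine, modulo the routine facts you list (though note that in the mixed-subdivision picture the cell $C''$ lives in the smaller sum $(n-1)\Delta_{d-1}$ rather than on a boundary facet of $n\Delta_{d-1}$; the unique-maximal-cell argument should be run in the product picture, as you in effect do). The genuine gap is exactly where you admit it is: the case $a_i\ge 1$ for all $i$ -- equivalently, injectivity of $T\mapsto ld(T)$ -- is the entire content of the proposition, and your proposal only gestures at it. The plan of taking a ``minimal'' pair of cells with equal left degree vector and extracting a directed cycle in $U(T,T')$ from conditions (2) and (3) is not carried out and is not routine: two cells with the same left degree vector need not be adjacent or even near one another, nothing in the sketch specifies what is being minimized or how condition (2) yields the claimed neighbourhood of adjacent cells, and locating the forbidden cycle is precisely where all the difficulty lives. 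As written, the central case is a conjecture, not a proof.

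For comparison, the paper itself does not prove this proposition; it quotes it as a special case of a statement inside the proof of Theorem~11.3 of Postnikov's paper, and the argument there is essentially a weighted volume count, quite different from your cycle-chasing plan. For positive reals $y_1,\dots,y_n$, the cells $y_1\Delta_{B_1}+\cdots+y_n\Delta_{B_n}$, as $(B_1,\dots,B_n)$ ranges over the cells of the given fine mixed subdivision, form a mixed subdivision of $y_1\Delta_{d-1}+\cdots+y_n\Delta_{d-1}=(y_1+\cdots+y_n)\Delta_{d-1}$. By Lemma~\ref{lem:product} the summand dimensions of a fine cell with $ld=(a_1,\dots,a_n)$ add up to $d-1$, so its volume scales as $y_1^{a_1}\cdots y_n^{a_n}$, and in fact its normalized volume is the multinomial coefficient $\binom{d-1}{a_1,\dots,a_n}$. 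Summing the cell volumes and comparing with $(y_1+\cdots+y_n)^{d-1}$ as polynomials in $y$ shows that every weak composition $(a_1,\dots,a_n)$ of $d-1$ occurs as a left degree vector (which, combined with your cardinality count, already finishes the proof), and indeed occurs exactly once. If you want a self-contained argument, this polynomial identity is the cleanest way to close your gap; otherwise cite Postnikov, as the paper does.
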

 
The reason we are interested in LDV and RDV is because LDV governs the shape of the cell and RDV governs the location of the cell.

Given $n\Delta_{d-1}$ and $i\in [d]$, we call the facet opposite to vertex $i$ as the \newword{$i$-facet}. A length $n$ simplex in a plane can be filled with upper and lower triangles. In higher dimension, although there is no analogue for the lower triangles, there is one for the upper triangles. It is just the collection of length $1$ simplices that have integer coordinates. We call these simplices the \newword{unit simplices}. We express the \newword{location} of a unit simplex as $(a_1,\cdots,a_d)$, where $a_i \in \ZZ$ stands for the distance between the $i$-facet and the unit simplex. We also have the relation that $\sum_i a_i = n-1$. See Figure~\ref{fig:cellrdv} for an example. The following lemma is a direct consequence of Lemma 14.9 of \cite{Postnikov01012009}.

\begin{lemma}
\label{lem:unit}
Each cell $T=(T_1,\cdots,T_n)$ in the fine mixed subdivision of $n\Delta_{d-1}$ contains exactly one unit simplex. The location of such simplex is equal to $rd(T)$.
\end{lemma}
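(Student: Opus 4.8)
The plan is to handle existence and uniqueness separately: first exhibit inside each fine cell $B_T=\Delta_{I_1}+\cdots+\Delta_{I_n}$ an explicit unit simplex whose location is $rd(T)$, then show by a soft dimension‑counting argument, using the bijection of Proposition~\ref{prop:Gcell}, that there is no other one.

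For existence, set $a:=rd(T)\in\ZZ^d_{\ge 0}$. Since $T$ is a spanning tree of $K_{n,d}$ it has $n+d-1$ edges, so $\sum_{m\in[d]}a_m=(n+d-1)-d=n-1$ and the candidate $U_a:=\conv\{a+e_k:k\in[d]\}$ is genuinely a unit simplex of $n\Delta_{d-1}$ with location $a$. To see $U_a\subseteq B_T$ it suffices to check that each vertex $a+e_k$ lies in $B_T$. Fixing $k$, I would root $T$ at the right vertex $k$; each left vertex $i\in[n]$ then has a parent edge, which (as $K_{n,d}$ is bipartite) joins $i$ to some right vertex $j_i$ with $j_i\in I_i$. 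Then $\sum_{i=1}^n e_{j_i}$ has, in coordinate $m$, the number of left‑children of $m$ in the rooted tree, which equals $\deg_T(m)-1$ for $m\ne k$ and $\deg_T(k)$ for $m=k$; hence $\sum_{i=1}^n e_{j_i}=a+e_k$. Since $e_{j_i}\in\Delta_{I_i}$, this gives $a+e_k\in B_T$ for every $k$, and therefore $U_a=\conv\{a+e_k:k\in[d]\}\subseteq B_T$.

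For uniqueness, observe that $U_a$ is full‑dimensional inside the hyperplane $\{\sum_i x_i=n\}$, whereas two distinct maximal cells of the mixed subdivision intersect in a common face of dimension at most $d-2$; hence $U_a$ can be contained in at most one fine cell. Combining this with the existence step — which shows that the cell $rd^{-1}(a)$ contains $U_a$ — and with the bijectivity of $T\mapsto rd(T)$ from Proposition~\ref{prop:Gcell}, we conclude that for every location $a$ there is exactly one cell containing $U_a$, namely $rd^{-1}(a)$. Read from the cell side, this says the only unit simplex contained in $B_T$ is the one at location $rd(T)$, which is the assertion of the lemma.

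I expect the existence step to be the main obstacle, since it requires producing a concrete Minkowski decomposition of each vertex of the sought unit simplex as a sum of vertices of the summand simplices $\Delta_{I_i}$; rooting $T$ at the relevant right vertex is exactly what makes the degree bookkeeping match $rd(T)$. Once that is in place, uniqueness is essentially formal, using only the face‑to‑face property of the subdivision and Proposition~\ref{prop:Gcell}. (Alternatively, one can shortcut the existence step by quoting Lemma~14.9 of \cite{Postnikov01012009}.)
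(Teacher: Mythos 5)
Your proof is correct, but it takes a different route from the paper: the paper offers no argument at all for this lemma, simply declaring it ``a direct consequence of Lemma~14.9 of \cite{Postnikov01012009}'', whereas you give a self-contained proof. Your existence step is sound: rooting the spanning tree $T$ at the right vertex $k$ makes every left vertex $i$ choose a parent $j_i\in I_i$, and the coordinate bookkeeping ($\deg_T(m)-1$ children for $m\neq k$, $\deg_T(k)$ for the root) shows $\sum_i e_{j_i}=rd(T)+e_k$, so each vertex of the candidate unit simplex is exhibited as an explicit Minkowski sum of vertices of the summands $\Delta_{I_i}$; together with $\sum_m(\deg_T(m)-1)=n-1$ this puts the unit simplex at location $rd(T)$ inside the cell. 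Your uniqueness step is also fine: a $(d-1)$-dimensional unit simplex cannot sit inside the intersection of two distinct maximal cells (that intersection is a common proper face, of dimension at most $d-2$), so each unit simplex lies in at most one cell, and combining this with existence and the bijectivity of $T\mapsto rd(T)$ from Proposition~\ref{prop:Gcell} forces each cell to contain exactly the unit simplex at its own RDV. What the two approaches buy: the paper's citation is the shortest path, leaning on Postnikov's lattice-point analysis of fine cells of generalized permutohedra, while your argument is elementary, stays entirely within the paper's setup, and makes transparent the mechanism by which the right degree vector records the cell's position --- which is exactly the intuition the paper invokes later when arguing that $\Q_{\alpha}$-type connectivity should persist for generalized permutohedra.
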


An example of this phenomenon is given in Figure~\ref{fig:cellrdv}.

\begin{figure}[htbp]
	\centering
		\includegraphics[width=0.3\textwidth]{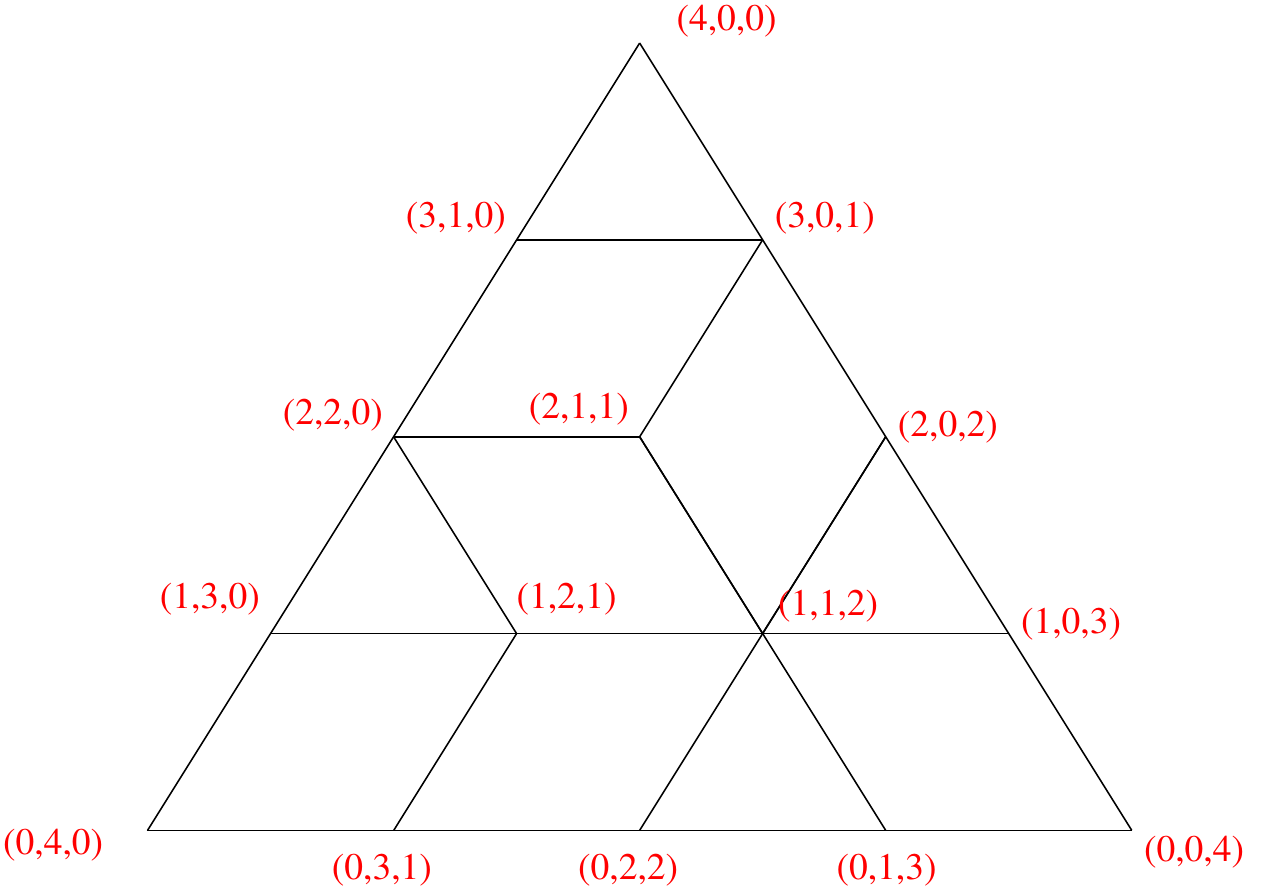}
		\includegraphics[width=0.3\textwidth]{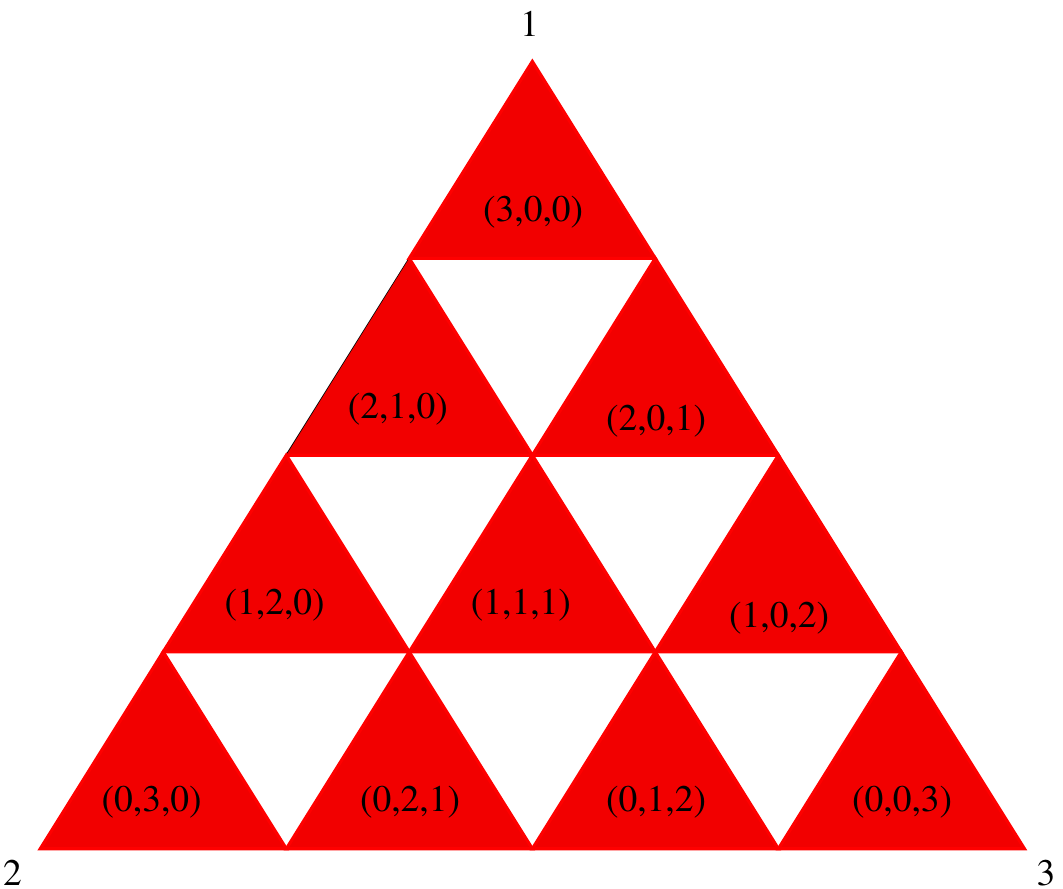}
		\includegraphics[width=0.3\textwidth]{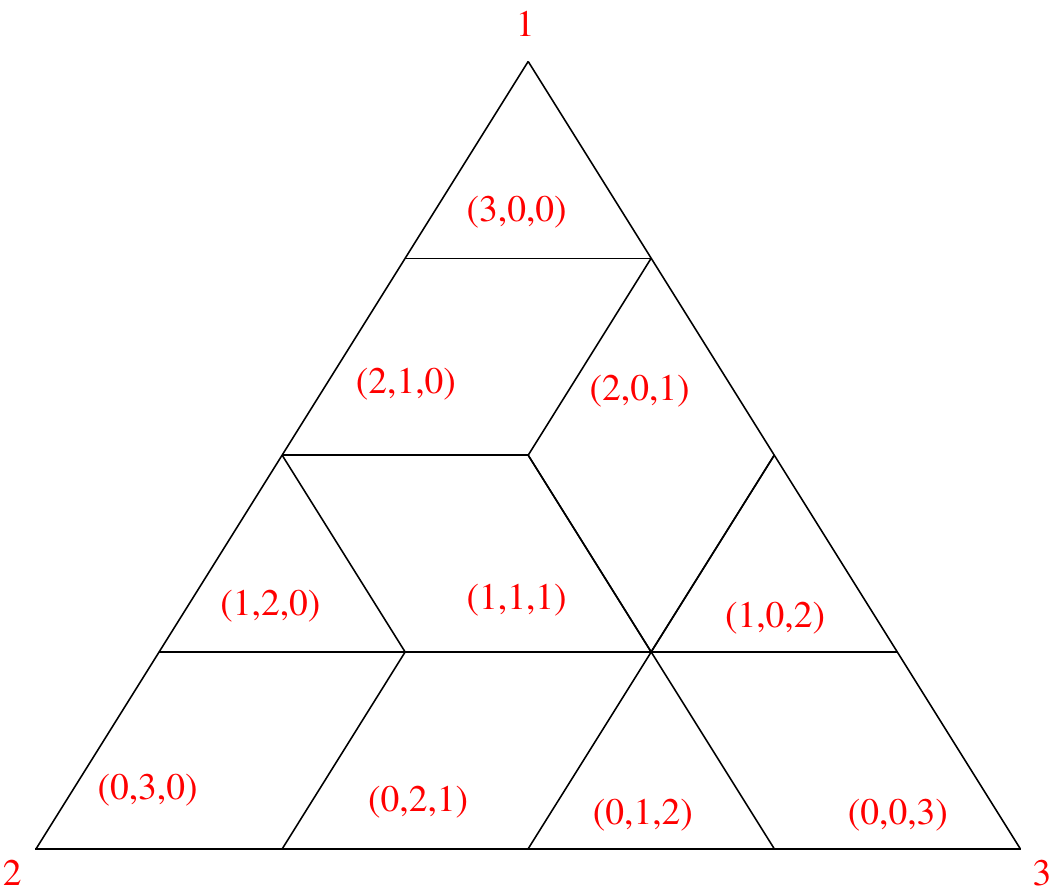}
	\caption{The number of $i$'s in a tope for each $i \in [d]$ describes the position of the tope. RDV describes the position of the unit simplex that the cell contains.}
	\label{fig:cellrdv}
\end{figure}

\section{Tropical Oriented Matroids}
In this section, we will review the definition of tropical hyperplane arrangements and tropical oriented matroids that were defined in \cite{2007arXiv0706.2920A}.

\begin{definition}
The \newword{tropical semiring} is given by the real numbers $\RR$ together with the operations of tropical addition $\oplus$ and tropical multiplication $\odot$ defined by $a \oplus b = max(a,b)$ and $a \odot b = a+b$.
\end{definition}

For convenience, we will work in the tropical projective $(d-1)$-space $\TP^{d-1}$, given by modding out by tropical scalar multiplication. In this space, \newword{tropical hyperplanes} are given by the vanishing locus of $\bigoplus c_i \odot x_i$, where the vanishing locus is defined to be the set of points where $max(c_1 + x_1,\cdots,c_d+x_d)$ is achieved at least twice.

Given an arrangement $H_1,\cdots,H_n$ in $\TP^{d-1}$, the \newword{type} of a point $x \in \TP^{d-1}$ is the $n$-tuple $(A_1,\cdots,A_n)$, where $A_i \subseteq [d]$ is the set of closed sectors of the hyperplane $H_i$ which $x$ is contained in. And since all points in a face of the arrangement have the same type, that type is called the type of the face.

\begin{definition}
An $(n,d)$-type is an $n$-tuple $A=(A_1,\cdots,A_n)$ of nonempty subsets of $[d]:=\{1,\cdots,d\}$. The sets $A_1,\cdots,A_n$ are called the \newword{coordinates} of $A$. 
\end{definition}

One should keep in mind that these types will correspond to trees coming from the faces of a triangulation of $\Delta_{n-1} \times \Delta_{d-1}$.

\begin{definition}[\cite{2007arXiv0706.2920A}]
Given two $(n,d)$-types $A$ and $B$, the \newword{comparability graph} $CG_{A,B}$ has vertex set $[d]$. For $1 \leq i \leq n$, we draw an edge between $j$ and $k$ for each $j \in A_i$ and $k \in B_i$. That edge is undirected if $j,k \in A_i \cap B_i$, and it is directed $j \rightarrow k$ otherwise.
\end{definition}

\begin{definition}[\cite{2007arXiv0706.2920A}]
A \newword{semidigraph} is a graph with some undirected edges and some directed edges. A \newword{directed path} from $a$ to $b$ in a semidigraph is a collection of vertices $v_0 = a, v_1,\cdots,v_k=b$ and a collection of edges $e_1,\cdots,e_k$, at least one of which is directed, such that $e_i$ is either a directed edge from $v_{i-1}$ to $v_i$ or an undirected edge connecting the two. A \newword{directed cycle} is a directed path with identical endpoints. A semidigraph is \newword{acyclic} if it has no directed cycles.
\end{definition}

\begin{definition}[\cite{2007arXiv0706.2920A}]
\label{def:bad}
The \newword{refinement} of a type $A=(A_1,\cdots,A_n)$ with respect to an ordered partition $P=(P_1,\cdots,P_r)$ of $[d]$ is $A_P = (A_1 \cap P_{m(1)},\cdots,A_n \cap P_{m(n)})$ where $m(i)$ is the largest index for which $A_I \cap P_{m(i)}$ is non-empty. A refinement $A_P$ is \newword{total} if all of its entries are singletons.
\end{definition}

For readers that are confused with this definition, one can ignore this definition and just think of the refinement as taking any nonempty subset of each $A_i$, since we will only be cosidering tropical oriented matroids corresponding to triangulations.

\begin{definition}[\cite{2007arXiv0706.2920A}]
A \newword{tropical oriented matroid} $M$ (with parameters $(n,d)$) is a collection of $(n,d)$-types which satisfy the following four axioms:
\begin{itemize}
\item Boundary : For each $j \in [d]$, the type $\newword{j} :=(j,\cdots,j)$ is in $M$.
\item Elimination : If we have two types $A$ and $B$ in $M$ and a position $j \in [n]$, then there exists a type $C$ in $M$ with $C_j = A_j \cup B_j$, and $C_k \in \{A_k,B_k,A_k \cup B_k\}$ for all $k \in [n]$.
\item Comparability : The comparability graph $CG_{A,B}$ of any two types $A$ and $B$ in $M$ is acyclic.
\item Surrounding : If $A$ is a type in $M$, then any refinement of $A$ is also in $M$.
\end{itemize}
\end{definition}

\begin{theorem}[\cite{2007arXiv0706.2920A}]
The types of the vertices of a tropical oriented matroid $M$ with parameters $(n,d)$ describe a set of spanning graphs defining a mixed subdivision of $n \Delta_{d-1}.$
\end{theorem}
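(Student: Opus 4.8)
The plan is to read off from the tropical oriented matroid $M$ an explicit candidate mixed subdivision and then verify the three conditions that make it one. To each vertex type $A=(A_1,\dots,A_n)$ of $M$ I attach the Minkowski cell $\C(A):=\Delta_{A_1}+\cdots+\Delta_{A_n}\subseteq n\Delta_{d-1}$, using the identification between subgraphs $(A_1,\dots,A_n)$ of $K_{n,d}$ and faces $\Delta_{A_1}+\cdots+\Delta_{A_n}$ from Remark~\ref{rem:subdivsurrounding}. I then must prove: (i) every $\C(A)$ is $(d-1)$-dimensional, equivalently each vertex type's subgraph is connected and covers every vertex of $K_{n,d}$, i.e. is a spanning graph; (ii) $\bigcup_A\C(A)=n\Delta_{d-1}$; and (iii) $\C(A)\cap\C(B)$ is a common face of $\C(A)$ and of $\C(B)$ for any two vertex types $A,B$. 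Statements (i)--(iii), together with the Surrounding axiom to supply the lower-dimensional faces, are exactly the assertion that the $\C(A)$ form a mixed subdivision; each will come from one or two of the four axioms.

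For (i): applying the Comparability axiom to pairs of distinct types of $M$ yields the acyclicity that rules out the bad cycles, and combined with Surrounding (which lets me replace any type by an arbitrary refinement) this forces the subgraph of a maximal type to be a forest --- hence, once connectivity is established, a connected spanning subgraph, a spanning tree in the fine case, so that by Lemma~\ref{lem:product} the cell $\C(A)$ is a genuine full-dimensional Minkowski cell with $\C(A)\cong\Delta_{A_1}\times\cdots\times\Delta_{A_n}$. Connectivity and the covering of all of $[d]$ I would extract from maximality of $A$ using Boundary and Elimination: were the subgraph $(A_1,\dots,A_n)$ disconnected, or were some $j\in[d]$ uncovered, eliminating $A$ against the boundary type $(j,\dots,j)$ at an appropriate position $i$ would produce a type of $M$ agreeing with $A$ except with $A_i$ enlarged, contradicting the maximality of $A$.

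For (ii): the Boundary axiom anchors the construction at the $d$ vertices of $n\Delta_{d-1}$, and the interior is filled by iterated Elimination --- starting from the boundary types and eliminating successively at the positions of $[n]$ produces vertex types whose cells sweep across the polytope. That these cells tile with neither gaps nor interior overlaps I would verify by matching them against Proposition~\ref{prop:Gcell} and Lemma~\ref{lem:unit}: the maximal cells of a fine mixed subdivision are in bijection, via their right-degree vectors, with the unit simplices of $n\Delta_{d-1}$ (locations summing to $n-1$), so it is enough to show the vertex types of $M$ realize each such location exactly once; Elimination provides the moves between cells at neighbouring locations, and Comparability forbids two distinct vertex types from containing the same unit simplex.

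For (iii): the intended common face of $\C(A)$ and $\C(B)$ is the cell of a common refinement of $A$ and $B$, and the content is that the actual geometric intersection is exactly this face. This is where acyclicity of the comparability graph $CG_{A,B}$ is used: a point of $\C(A)\cap\C(B)$ lying outside the intended face would, reading off the positions where $A$ and $B$ disagree, produce a directed cycle in $CG_{A,B}$, contradicting Comparability. I expect the main obstacle to be the covering step (ii): Elimination only guarantees the existence of a type with prescribed behaviour in one coordinate, so upgrading it to the statement that the cells $\C(A)$ leave no part of $n\Delta_{d-1}$ uncovered --- equivalently, that they account for the full normalized volume --- requires a careful simultaneous induction invoking all four axioms together with the dimension count of Lemma~\ref{lem:product} and the bijection of Proposition~\ref{prop:Gcell}. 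This ``no gaps'' bookkeeping is the crux and has no obvious shortcut; tellingly, it is the same difficulty that makes the converse direction, recovering a tropical oriented matroid from an arbitrary subdivision, the hard and still-conjectural part of the picture.
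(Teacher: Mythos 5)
This statement is quoted by the paper from \cite{2007arXiv0706.2920A} and is not proved in the paper at all, so there is no in-paper argument to compare yours against; judged on its own, your proposal is an outline with a genuine hole at its central step, and you say so yourself. The ``no gaps'' part (ii) is exactly what a proof of this theorem must supply, and the route you sketch for it is circular: Proposition~\ref{prop:Gcell} and Lemma~\ref{lem:unit} are statements \emph{about} a fine mixed subdivision already given --- they presuppose the subdivision structure you are trying to establish, and moreover the theorem concerns arbitrary (not necessarily fine) mixed subdivisions, where cells need not contain a unique unit simplex and the LDV/RDV bijection is not available. The way this is actually handled in \cite{2007arXiv0706.2920A} is not by volume or unit-simplex bookkeeping but by a local criterion for subdivisions: it suffices to show that any two cells meet in a common face and that every facet of a cell either lies in the boundary of $n\Delta_{d-1}$ or is a facet of another cell, the Elimination axiom being what produces the neighboring cell across an interior facet; the covering of the polytope then follows by a connectivity/topological argument. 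Recasting your step (ii) in that form is the missing idea.

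There is also a concrete error in your step (i): Comparability plus Surrounding does not force the graph of a maximal type to be a forest. Acyclicity of the comparability graph only forbids \emph{directed} cycles in the semidigraph sense (at least one directed edge), and for a single type all edges of $CG_{A,A}$ are undirected; indeed the collection of all $(n,d)$-types is a tropical oriented matroid whose unique maximal type is $([d],\dots,[d])$, with graph all of $K_{n,d}$. Vertices of a tropical oriented matroid correspond to full-dimensional cells of the subdivision, which are trees only in the fine/triangulation case, so a correct argument for (i) must work with connected spanning subgraphs, not spanning trees, and cannot invoke Lemma~\ref{lem:product} as you do. Your step (iii), reading a directed cycle in $CG_{A,B}$ out of a point of $\C(A)\cap\C(B)$ outside the common face, is the right idea and is essentially how Comparability is used in \cite{2007arXiv0706.2920A}; but as it stands the proposal does not constitute a proof.
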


They proposed the following three conjectures:
\begin{enumerate}
\item There is a one-to-one correspondence between the set of spanning graphs defining a subdivision of $\Delta_{n-1} \times \Delta_{d-1}$ and a tropical oriented matroid with parameters $(n,d)$.
\item The dual of a tropical oriented matroid with parameters $(n,d)$ is a tropical oriented matroid with parameters $(d,n)$.
\item Every tropical oriented matroid can be realized by an arrangement of tropical pseudo-hyperplanes.
\end{enumerate}


Before we end this section, we are going to present an easier way to think of the surrounding axiom.

\begin{lemma}
\label{lem:surrounding}
Let $A = (A_1,\cdots,A_n)$ be a $(n,d)$-type of a tropical oriented matroid, such that if one views this type as a subgraph of $K_{n,d}$, then it does not contain a cycle. Choose any $i \in [d]$ such that $|A_i| > 1$. Then choose any $k \in A_i$. Let $A'$ be obtained from $A$ by deleting $k$ from $A_i$. Then surrounding axiom tells us that $A'$ is in this tropical oriented matroid.
\end{lemma}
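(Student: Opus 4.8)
The plan is to produce $A'$ directly as a \emph{refinement} of $A$ itself, for a suitably chosen ordered partition of $[d]$ with only two blocks, and then quote the surrounding axiom. A refinement acts on every coordinate of the type simultaneously, so the whole difficulty is to design the partition $P=(P_1,P_2)$ so that its effect on the chosen coordinate $i$ is precisely to delete $k$, while it leaves every other coordinate untouched. Unwinding Definition~\ref{def:bad}: ``$A_\ell$ is left unchanged'' for $\ell\neq i$ means $A_\ell$ sits inside a single block of $P$, while ``$k$ is removed from $A_i$'' means $A_i$ meets both blocks, with $k\in P_1$ and $A_i\setminus\{k\}\subseteq P_2$, so that $m(i)=2$ and the $i$-th coordinate of the refinement is $A_i\cap P_2=A_i\setminus\{k\}$.

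To build such a $P$, I would introduce the auxiliary graph $G$ on vertex set $[d]$ in which $u$ and $v$ are adjacent whenever $u,v\in A_\ell$ for some $\ell\in[n]\setminus\{i\}$. Each $A_\ell$ with $\ell\neq i$ then spans a clique of $G$, hence lies in a single connected component of $G$; so the requirement ``$A_\ell\subseteq$ one block of $P$'' is automatic as soon as each block of $P$ is a union of connected components of $G$. I would therefore let $P_2$ be the union of all connected components of $G$ that meet $A_i\setminus\{k\}$, and set $P_1=[d]\setminus P_2$, which is again a union of components.

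The crux — and the step I expect to be the only real obstacle — is to verify that this is a genuine separation, i.e.\ that $k\notin P_2$; equivalently, that $k$ lies in no component of $G$ meeting $A_i\setminus\{k\}$. I would argue by contradiction. A shortest $G$-path $k=u_0,u_1,\dots,u_m=x$ to a vertex $x\in A_i\setminus\{k\}$ has pairwise distinct vertices and, by minimality, is witnessed by pairwise distinct coordinates $\ell_1,\dots,\ell_m\in[n]\setminus\{i\}$ (a repeated coordinate $\ell_s=\ell_t$, $s<t$, would force $u_{s-1}$ and $u_t$ into a common $A_{\ell_s}$, hence a $G$-shortcut). Since the $u_t$ lie in $[d]$ while $i,\ell_1,\dots,\ell_m$ are distinct vertices of $[n]$, the closed walk $i,\,u_0,\,\ell_1,\,u_1,\,\ell_2,\,\dots,\,\ell_m,\,u_m,\,i$ in $K_{n,d}$ visits pairwise distinct vertices and uses only edges of the subgraph $A$: the edges $\ell_t u_{t-1}$ and $\ell_t u_t$ come from $u_{t-1},u_t\in A_{\ell_t}$, and the edges $iu_0$, $iu_m$ come from $k,x\in A_i$. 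Thus $A$ contains a cycle of length $2m+2\ge 4$, contradicting the hypothesis that $A$ is acyclic.

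Once $k\notin P_2$ is known, the rest is routine bookkeeping: $(P_1,P_2)$ is a genuine ordered partition of $[d]$ because $P_2\supseteq A_i\setminus\{k\}\neq\emptyset$ (this is where $|A_i|>1$ enters) and $k\in P_1$; for $\ell\neq i$ exactly one block meets $A_\ell$, so the $\ell$-th coordinate of $A_P$ is $A_\ell$ unchanged; and for coordinate $i$ both blocks meet $A_i$, the larger index is $2$, and $A_i\cap P_2=A_i\setminus\{k\}$. Hence $A_P=A'$, and by the surrounding axiom $A'\in M$.
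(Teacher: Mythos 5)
Your argument is correct and is in the same spirit as the paper's: both produce $A'$ as the refinement of $A$ with respect to an explicit two-block ordered partition of $[d]$ and then quote the surrounding axiom, with acyclicity of $A$ supplying the needed separation between $k$ and $A_i\setminus\{k\}$. Where you differ is in how that separation is certified, and your version is the more robust one. The paper takes $Z$ to be the union of the coordinates $A_j$, $j\neq i$, containing $k$, takes $W$ to be the union of the remaining coordinates, asserts $Z\cap W=\{k\}$ ``since otherwise we get a cycle,'' and uses the partition $(W^c\cup\{k\},\,W\setminus\{k\})$. That justification is only one overlap deep: for the acyclic (indeed spanning-tree) type $A=(\{1,2\},\{1,3\},\{3\})$ with $i=1$ and $k=1$, one has $3\in Z\cap W$ although no cycle is present, and the resulting partition $(\{1\},\{2,3\})$ then alters the second coordinate; so, read literally, the paper's construction needs exactly the repair you supply. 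By passing to connected components of the auxiliary graph on $[d]$ (the transitive closure of the overlap relation among the $A_\ell$, $\ell\neq i$) and converting a shortest $G$-path from $k$ to $A_i\setminus\{k\}$ into a cycle of length at least $4$ in $A$ --- your observation that a shortest path can be witnessed by pairwise distinct coordinates is the key point, and it is right --- you handle arbitrarily long chains of overlapping coordinates; on the example above your recipe yields the partition $(\{1,3\},\{2\})$, whose refinement is indeed $A'$. So: same strategy, but your execution closes a genuine gap in the paper's one-line justification.
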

\begin{proof}
Let $Z$ be the union of all $A_j$ such that $k \in A_j$ and $j \not = i$. Let $W$ be the union of rest of $A_j$'s. Then $Z \cap W = \{k\}$ since otherwise, we get a cycle in $A$. So let our ordered partition be $(W^c \cup \{k\}, W \setminus \{k\})$. Then we get $A'$ from $A$ by a refinement as given in Definition~\ref{def:bad}.
\end{proof}

This is a more natural way to think of the surrounding axiom for our purpose, since all types coming from a fine mixed subdivision of $n \Delta_{d-1}$ have no cycles and satisfy this property, as can be seen from Remark~\ref{rem:subdivsurrounding}. Whenever we use this property (or Remark~\ref{rem:subdivsurrounding}), we will refer to this as the \newword{surrounding property}.


\section{Elimination Property}

Fix a fine mixed subdivision of $n\Delta_{d-1}$. Let $\M$ denote the collection of trees coming the subdivision. We are going to show that this is a tropical oriented matroid. Although we don't use it, our proof is heavily motivated from the topological representation conjecture that a mixed subdivision of $n\Delta_{d-1}$ can be viewed as a tropical pseudo-hyperplane arrangement.

Roughly, the elimination property can be thought as existence of a very nice path between two types $A$ and $B$. In particular, if $A_i = B_i$, we want a path such that its $i$-th coordinate is always equal to $A_i=B_i$. We are going to use induction based on an index defined for each pair of types, called \newword{rank}. Throughout the examples given in the section, for convenience, we are going to write sets such as $\{1,2,3\}$ by $123$. Also, recall that we call the $0$-dimensional faces in a fine mixed subdivision of $n\Delta_{d-1}$ as \newword{topes}, instead of vertices, to avoid confusion with the tropical oriented matroid terminology. 

Here is a motivation for the definition of the rank. Assume we are given a fine mixed subdivision of $n\Delta_2$ and let $A$ and $B$ be two types such that $A_i = B_i$. Let's look at the corresponding tropical pseudo-hyperplane arrangement. We are going to consider the case when $A_i = B_i = \{2\}$ and this is illustrated in Figure~\ref{fig:elimex}. Assume we are given some path between $A$ and $B$ such that for some types along this path, the $i$-th coordinate is not equal to $A_i$. Let $C$ and $D$ be the first and last points at which the path intersects the $i$-th tropical pseudo-hyperplane. Then $C$ and $D$ are both on the boundary of the region $\{2\}$ with respect to the $i$-th tropical pseudo-hyperplane. If we know that there is a nice path between these two points on this boundary, then we can lift this path a little bit to get a path inside the $\{2\}$-region by using the surrounding property.

\begin{figure}[htbp]
	\centering
		\includegraphics[width=0.2\textwidth]{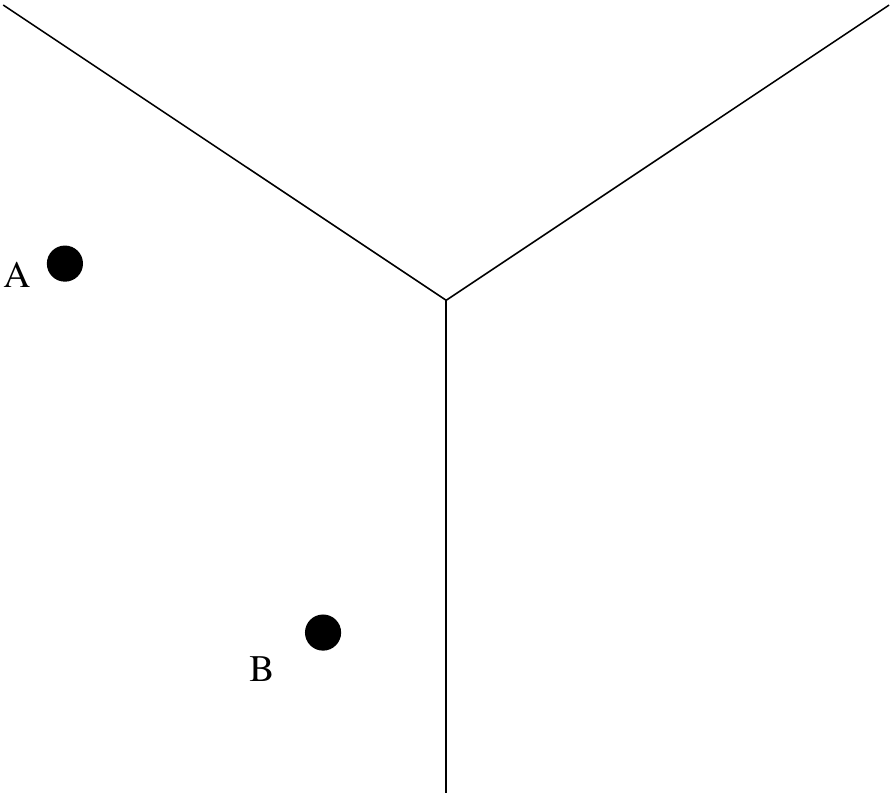}
		\includegraphics[width=0.2\textwidth]{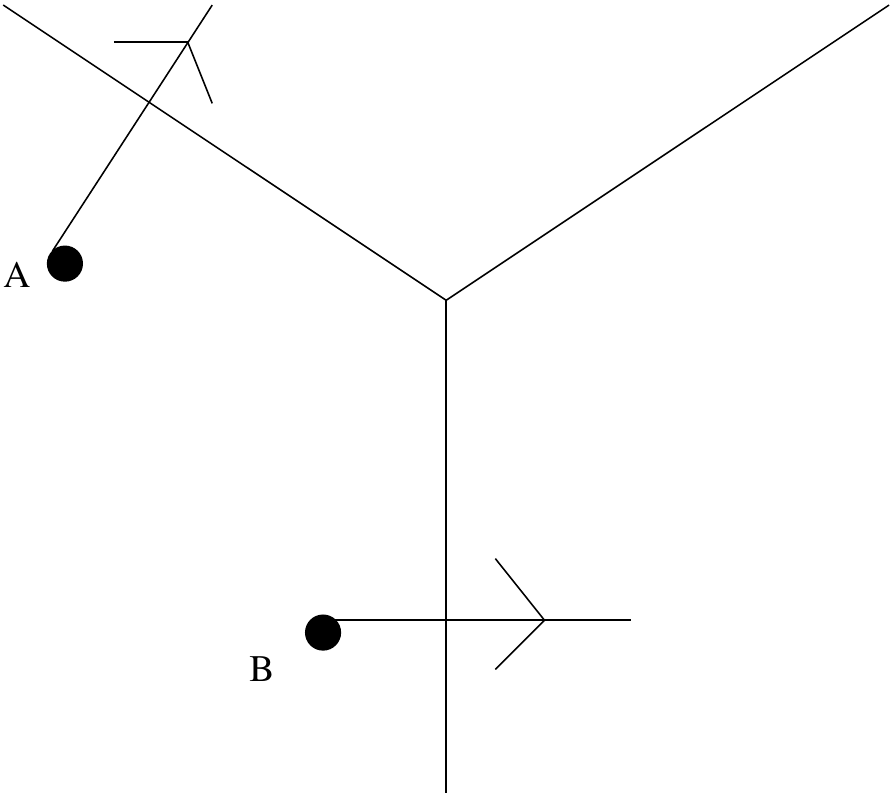}
		\includegraphics[width=0.2\textwidth]{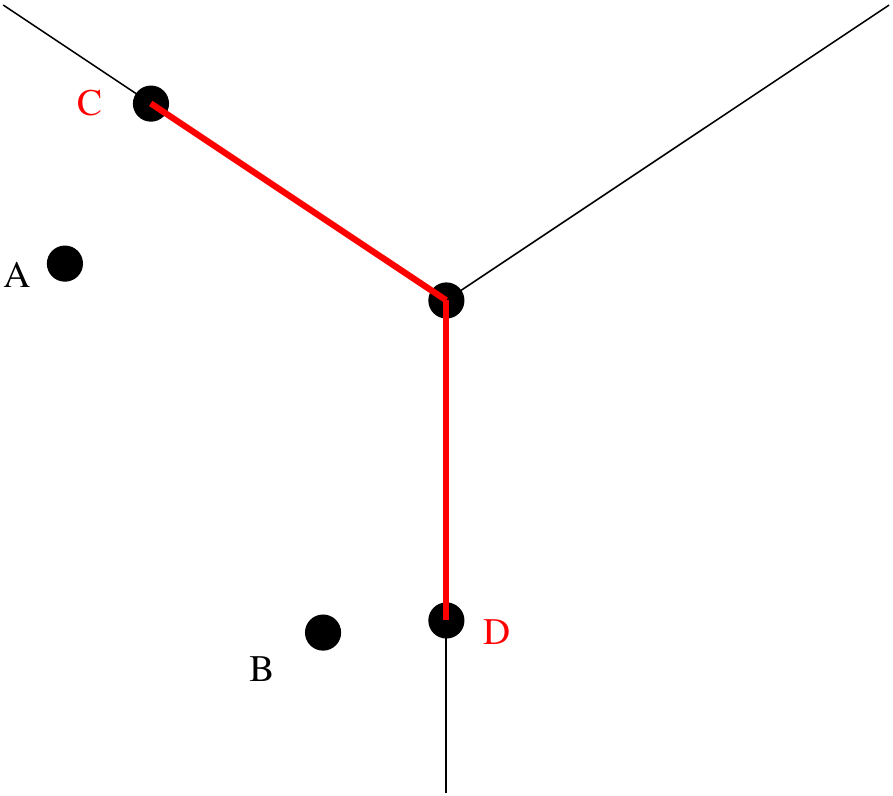}
		\includegraphics[width=0.2\textwidth]{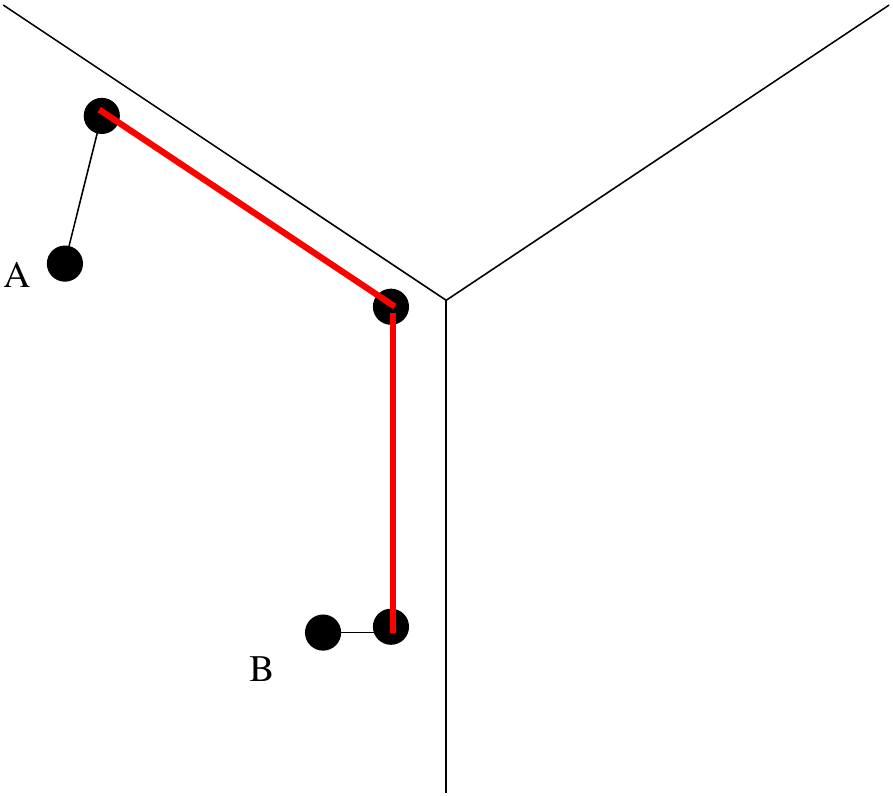}
	\caption{Rank is a good index for proving elimination property}
	\label{fig:elimex}
\end{figure}

\begin{definition}
For each pair of types $A,B \in \M$, the \newword{rank} of the pair $(A,B)$ is defined as $(r_1,\cdots,r_n)$ where for each $i \in [d]$, $r_i = min(|A_i|,|B_i|) - 1$. This is going to be denoted by $rk(A,B)$. For any $\alpha = (a_1,\cdots,a_n)$ and $\beta = (b_1,\cdots,b_n)$, we write $\alpha \geq \beta$ if we have $a_i \geq b_i$ for all $i \in [n]$. Similarly we write $\alpha > \beta$ if the inequality is strict in at least one coordinate.
\end{definition}

For example, $rk((123,2),(3,123)) = (0,0)$ and $rk((123,45),(3,125)) = (0,1)$. 

So $rk((123,2),(3,123)) < rk((123,45),(3,125))$.

\begin{definition}
We will say that two types $A$ and $B$ are \newword{adjacent} if $A$ and $B$ are different in exactly one coordinate and also differ by one element in it. A \newword{path} between two types is a sequence of types $A = C^0 \rightarrow C^1 \rightarrow C^2 \rightarrow \cdots \rightarrow C^{q-1} \rightarrow C^q = B$ such that each $C^t$ is adjacent to $C^{t-1}$ and $C^{t+1}$. The \newword{length} of the path is given by $q$. Given a path, we say that coordinate $i$ is \newword{strong} if:
\begin{enumerate}
\item in that coordinate, after some some element is deleted, no element gets added.
\item for all $t$, we have $A_i \cup B_i \supseteq C^t_i$.
\item if an element $j$ was added, then it does not get deleted later. This implies that $C_i^t \subseteq A_i \cup B_i$ for all $t$.
\end{enumerate}

A \newword{strong path} between types $A$ and $B$ is a path that is strong in every coordinates.

\end{definition}

A strong path is a path such that in each coordinate, it changes like
$$123 \rightarrow 1234 \rightarrow 12345 \rightarrow 1245 \rightarrow 145.$$

The reason we are interested in strong paths is because it is enough to find a strong path between any two types $A$ and $B$  to prove the elimination property for $\M$.

\begin{lemma}
\label{lem:pathconvert}
If there is a strong path between any two types $A$ and $B$, then elimination holds.
\end{lemma}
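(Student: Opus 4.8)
The plan is to fix types $A,B\in\M$ and a position $j\in[n]$ and, out of the hypothesized strong path in $\M$ between $A$ and $B$, to manufacture a type $C\in\M$ with $C_j=A_j\cup B_j$ and $C_k\in\{A_k,B_k,A_k\cup B_k\}$ for every $k\in[n]$, which is exactly what the elimination axiom asks for.

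The first step is to pin down the coordinatewise shape of a strong path $A=C^0\to C^1\to\cdots\to C^q=B$. Fix a coordinate $k$. By the first condition in the definition of a strong coordinate, once an element is deleted from coordinate $k$ nothing is ever added there afterwards, so the sets $C^0_k,C^1_k,\ldots,C^q_k$ first grow (as sets) to a peak and then shrink; since the walk starts at $A_k$, ends at $B_k$, and never re-adds a deleted element, the peak contains $A_k\cup B_k$, while the second condition forces $C^t_k\subseteq A_k\cup B_k$ for every $t$. Hence in coordinate $k$ the peak is exactly $A_k\cup B_k$, and for every $t$ one has $C^t_k\subseteq A_k\cup B_k$ together with $A_k\subseteq C^t_k$ (when $t$ is in the growing phase) or $B_k\subseteq C^t_k$ (when $t$ is in the shrinking phase). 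In particular, reading this off for coordinate $j$, the path passes through some type $\tilde C:=C^{t^\ast}$ with $\tilde C_j=A_j\cup B_j$.

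The second step is to repair $\tilde C$. For each $k$ with $\tilde C_k\notin\{A_k,B_k,A_k\cup B_k\}$ we have $\tilde C_k\subsetneq A_k\cup B_k$, and the previous step gives $A_k\subseteq\tilde C_k$ or $B_k\subseteq\tilde C_k$ — exactly one of the two, since both would force $A_k\cup B_k\subseteq\tilde C_k$; set $C_k$ equal to $A_k$ or $B_k$ accordingly. For every other $k$, and in particular for $k=j$, set $C_k:=\tilde C_k$. Then $\emptyset\neq C_k\subseteq\tilde C_k$ for all $k$, so $C=(C_1,\ldots,C_n)$ is a face of $\tilde C$ and therefore lies in $\M$ by the surrounding property (Remark~\ref{rem:subdivsurrounding}). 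By construction $C_j=\tilde C_j=A_j\cup B_j$ and $C_k\in\{A_k,B_k,A_k\cup B_k\}$ for every $k$, so elimination holds.

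The only genuinely delicate point is that $\tilde C$ itself need not satisfy the elimination constraint — an intermediate coordinate of the path can be stuck strictly between $A_k$ (or $B_k$) and $A_k\cup B_k$ — so one cannot take $C=C^{t^\ast}$ directly; the surrounding property is precisely the tool that contracts those offending coordinates, and the one thing that must be checked is that this contraction leaves coordinate $j$ untouched, so that the value $A_j\cup B_j$ there survives. The ``grow-then-shrink with peak $A_k\cup B_k$'' description of a strong path is then routine bookkeeping; the substantive work — exhibiting a strong path between an arbitrary pair of types of $\M$ — is postponed to the following sections.
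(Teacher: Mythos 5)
Your proof is correct and follows the same route as the paper: locate the type on the strong path whose $j$-th coordinate has reached the peak $A_j\cup B_j$, note that every coordinate of that type is sandwiched between $A_k$ or $B_k$ and $A_k\cup B_k$, and then shrink the offending coordinates via the surrounding property while leaving coordinate $j$ intact. The paper states exactly these two facts and invokes the surrounding property; you have simply written out the bookkeeping in full.
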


\begin{proof}
Given a strong path between $A$ and $B$, we have that for each coordinate $i$, 
\begin{itemize}
\item there is type $C^t$ on the path such that $C_i^t = A_i \cup B_i$,
\item for any type $C^t$ on the path, $C_i^t$ contains $A_i$ or $B_i$.
\end{itemize}

Hence the result follows from the surrounding property.
\end{proof}

Notice that in the example of a strong path above, the cardinality of each set is bounded below by $min(|A_i|,|B_i|)$. When we are looking for a strong path between $A$ and $B$, we do not consider all types. We only consider the types where the cardinality is bounded below by $rk(A,B)$.

\begin{definition}
For each $\alpha = (a_1,\cdots,a_n)$, $\Q_{\alpha}$ is defined as the collection of types $(A_1,\cdots,A_n)$ such that $|A_i| > a_i$ for all $i \in [n]$. 
\end{definition}

We use $\Delta(A,B)$ to denote $\sum_i (|A_i \setminus B_i | + |B_i \setminus A_i|)$. Then any path between $A$ and $B$ has length at least $\Delta(A,B)$. The length of a strong path between $A$ and $B$ is equal to $\Delta(A,B)$. We are later going to show that we can transform a lengthwise-shortest path between $A$ and $B$ in $\Q_{rk(A,B)}$ to a strong path. So we want to show that given any types $A$ and $B$ in $\Q_{\alpha}$, there is a path connecting them in $\Q_{\alpha}$.

\begin{figure}[htbp]
	\centering
		\includegraphics[width=0.4\textwidth]{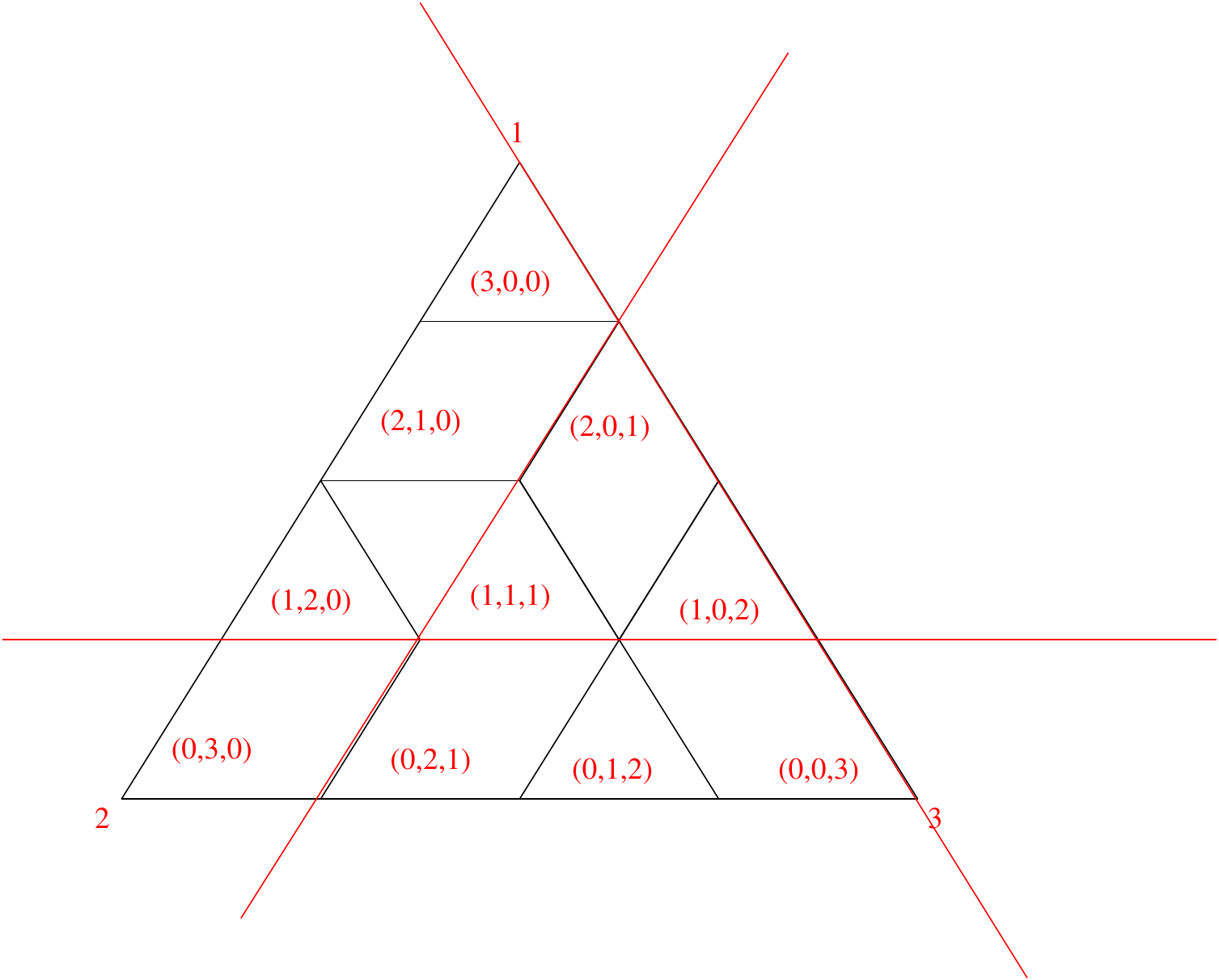}
		\includegraphics[width=0.4\textwidth]{msubtope}
	\caption{How $\T_{1,0,1}^*$ looks like.} 
	\label{fig:toperank}
\end{figure}

If we consider only the cells of $\Q_{\alpha}$, we are basically putting a cardinality restriction on the LDV's. Given an $(n,d)$-type $A$ such that all $i \in [d]$ appears in $A$, we define $A^T$ as a type with parameters $(d,n)$, that has $i \in A_j^T$ if and only if $j \in A_i$. We say that $A^T$ is the \newword{dual} of $A$. We can take the dual of any type that is not on the boundary of $n\Delta_{d-1}$. Dual of a cell is a cell, LDV becomes RDV, and the cardinality restriction on RDV is easy to view.

\begin{remark}
\label{rem:dual}
Define $\Q_{\alpha}^*$ to be the collection of types in the fine dual mixed subdivision (i.e. mixed subdivision of $d \Delta_{n-1}$ coming from the same triangulation $\Delta_{d-1} \times \Delta_{n-1}$) such that it contains strictly more than $a_i$ number of $i$'s for each $i \in [n]$. Then a cell is in $\Q_{\alpha}$ if and only if its dual is in $\Q_{\alpha}^*$. And two cells are adjacent (i.e. sharing a common facet) in $\Q_{\alpha}$ if and only if their duals are adjacent in $\Q_{\alpha}^*$. 
\end{remark}

Due to Lemma~\ref{lem:unit}, the unit simplices in these cells form a subsimplex of $d \Delta_{n-1}$. We will denote this as $\T_{\alpha}^*$. Although we will not define $\T_{\alpha}$, we will keep the star in the notation to emphasize the fact that $\T_{\alpha}^*$ is in $\Q_{\alpha}^*$. Take a look at the first picture of Figure~\ref{fig:toperank}. Cells in $\Q_{1,0,1}^*$ are the two rhombi and the simplex having RDV $(1,1,1),(2,0,1)$ and $(1,0,2)$. Also, $\T_{1,0,1}^*$ is the length $2$ simplex surrounded by the red lines. Before we prove that $\Q_{\alpha}$ is connected, we need the following well known results in integer programming.

\begin{theorem}[\cite{Schrijver03acourse}]
\label{thm:int}
A matrix $Y$ is called \newword{totally unimodular} if each square submatrix of $Y$ has determinant equal to $0, +1$ or $-1$. Let $Y$ be a totally unimodular $m \times n$ matrix and let $b \in \ZZ^m$. Then the polyhedron $P:=\{x|Yx\leq b\}$ has integer vertices.
\end{theorem}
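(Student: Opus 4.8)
The plan is to use the standard vertex characterization for polyhedra together with Cramer's rule. First I would recall that a point $v$ of $P = \{x \in \RR^n : Yx \le b\}$ is a vertex precisely when the rows of $Y$ indexed by the inequalities that are tight at $v$ span all of $\RR^n$; in particular one can choose $n$ such tight rows that are linearly independent. Collecting these rows into an $n \times n$ submatrix $Y'$ of $Y$, and the corresponding entries of $b$ into $b' \in \ZZ^n$, we obtain $Y' v = b'$ with $Y'$ nonsingular, hence $v = (Y')^{-1} b'$.

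Next I would invoke total unimodularity. Since $Y'$ is a square submatrix of $Y$ and is nonsingular, $\det Y' = \pm 1$. By Cramer's rule the $i$-th coordinate of $v$ equals $\det \widetilde{Y}'_i / \det Y'$, where $\widetilde{Y}'_i$ is obtained from $Y'$ by replacing its $i$-th column with $b'$. The matrix $\widetilde{Y}'_i$ has integer entries (the entries of $Y$ all lie in $\{0, \pm 1\}$ and $b$ is integral), so its determinant is an integer; dividing by $\pm 1$ leaves it an integer. Therefore $v \in \ZZ^n$, i.e.\ every vertex of $P$ is integral.

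The only real subtlety, and the step I would be most careful about, is the polyhedral bookkeeping in the first paragraph: one must verify that a vertex genuinely forces $n$ linearly independent tight constraints, so that $Y'$ is honestly square and invertible. This is where one uses that $v$, being a $0$-dimensional face, is the unique point of $P$ satisfying its tight constraints with equality (and that $P$ must contain no line for a vertex to exist; otherwise the statement is vacuous). Once this is set up the determinantal computation is immediate. An alternative, somewhat longer route avoids Cramer's rule by inducting on $n$ and using that pivoting on and deleting rows/columns preserve total unimodularity, but the argument above via Cramer's rule is the most direct.
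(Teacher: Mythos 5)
Your argument is correct: the characterization of a vertex as a point where $n$ linearly independent constraints are tight, followed by Cramer's rule with $\det Y' = \pm 1$ and an integer numerator, is exactly the standard proof of this fact. The paper itself offers no proof --- it simply cites the result from Schrijver's book as a known tool --- and your write-up reproduces the textbook argument found there, so there is nothing to reconcile beyond noting that your careful point about needing $n$ linearly independent tight rows (and the statement being vacuous when $P$ has no vertices) is indeed the only subtlety.
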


\begin{lemma}[\cite{citeulike:323941}]
\label{lem:int}
A matrix $Y$ is an \newword{interval matrix} if it is a $\{0,1\}$-matrix and each row of $Y$ has $1$'s consecutively. Then $Y$ is also totally unimodular.
\end{lemma}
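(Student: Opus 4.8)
The plan is to verify the definition of total unimodularity directly: show that every square submatrix of an interval matrix $Y$ has determinant $0$, $+1$, or $-1$. The first step is the elementary observation that the class of interval matrices is closed under deleting columns, provided the surviving columns are kept in their original left-to-right order: in each row the surviving $1$'s still occupy a consecutive block. Hence it suffices to prove $|\det Y'|\le 1$ for an arbitrary $k\times k$ interval matrix $Y'$, and we may assume its columns are indexed $1,\dots,k$ so that row $i$ has its $1$'s exactly in columns $\ell_i,\ell_i+1,\dots,r_i$ (with the convention that the block is empty when the row is zero).

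Next I would apply the determinant-preserving column operation that replaces the $j$th column $c_j$ by $c_j-c_{j-1}$, performed for $j=k,k-1,\dots,2$ so that each subtraction uses the original column $c_{j-1}$. Column $1$ is left untouched, and a short case check ($j=\ell_i$; $\ell_i<j\le r_i$; $j=r_i+1$; all other $j$) shows that in the resulting matrix $M$ each row has a single $+1$ in column $\ell_i$, at most one $-1$ in column $r_i+1$ (absent when $r_i=k$ or the row was zero), and $0$ elsewhere.

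It then remains to show that any square matrix $M$ all of whose rows contain at most one $+1$ and at most one $-1$ (and no other nonzero entries) has determinant in $\{0,\pm1\}$. I would prove this by induction on the size. If some row is zero the determinant is $0$; if some row has exactly one nonzero entry, expanding along that row reduces to a smaller matrix of the same type and multiplies the determinant by $\pm1$; and in the remaining case every row contains exactly one $+1$ and one $-1$, so every row sums to $0$, the all-ones vector lies in the kernel, and the determinant is $0$. Combining the three steps gives $\det Y'\in\{0,\pm1\}$, so $Y$ is totally unimodular. (More conceptually, $M$ is the transpose of a submatrix of the vertex–arc incidence matrix of a directed graph, and the whole argument is the unwinding of the fact that $Y$ is the network matrix of a directed path, the arc of row $i$ running from the left end of its block to one past the right end; network matrices are known to be totally unimodular.)

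I expect there is no deep obstacle here — the result is classical — and the only things needing care are bookkeeping items: confirming that the consecutive-ones property really does survive passage to a submatrix once the retained columns are read in their induced order, and tracking the boundary behavior of the column operation (column $1$ needs no change, and a row whose block ends at column $k$ simply loses its $-1$).
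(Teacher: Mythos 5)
The paper does not prove this lemma at all; it is quoted as a classical fact with a citation to the literature. Your argument is a correct, self-contained version of the standard textbook proof: submatrices of interval matrices inherit the consecutive-ones property, the column operations $c_j \mapsto c_j - c_{j-1}$ (performed for $j=k,\dots,2$ so each subtraction uses an as-yet-unmodified column) preserve the determinant and convert each row into one with at most a single $+1$ and a single $-1$, and the final inductive case analysis (zero row, row with one nonzero entry, or all row sums zero) is exactly the argument showing digraph incidence-type matrices are totally unimodular. The boundary bookkeeping you flag (column $1$ untouched, rows with $r_i=k$ losing their $-1$) checks out, so nothing is missing; your proof simply supplies what the paper delegates to the reference.
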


We want to show that the matrix defining a fine cell is totally unimodular. To do this, we need a way to describe the matrix defining a fine cell. Note that $n\Delta_{d-1}$ lives on the plane $x_1+\dots+x_d=n$. Let us project it onto the plane $x_d=0$. Denote the image as $n\Delta_{d-1}'$, which lives in $\RR^{d-1}$. The projection does not change any fine mixed subdivision structure.

\begin{lemma}
Let $T$ be a fine mixed cell of $n\Delta_{d-1}'$. For any edge $e$ of $T$ that is not connected to a leaf on the left side, we assign a facet $F_e$ of $T$ by deleting the edge from $T$. Let us denote by $I_e$ the set of vertices on the right side which are not connected to $d$ in $T\setminus e$. The equation of $F_e$ is given by $\sum_{j\in I_e} x_j = c$ for some $c \in \ZZ$.

\end{lemma}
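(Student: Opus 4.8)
The plan is to exhibit the claimed linear functional explicitly and check by hand that it is constant on the vertices of $F_e$. Write $e=(i_0,j_0)$; since $e$ is not incident to a left leaf we have $|I_{i_0}|\ge 2$, and $F_e$ is the face $(I_1,\dots,I_{i_0}\setminus\{j_0\},\dots,I_n)$, which by Lemma~\ref{lem:product} is a product of simplices of dimension $\sum_i(|I_i|-1)-1=d-2$, i.e.\ a genuine facet. Recall that $T$ is a spanning tree of $K_{n,d}$ and that, as a polytope, $T=\Delta_{I_1}+\dots+\Delta_{I_n}$ with topes (vertices) the points $e_{v_1}+\dots+e_{v_n}$ obtained by choosing $v_i\in I_i$, and that after projecting to $x_d=0$ these live in $\RR^{d-1}$. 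A tope of $T$ lies in $F_e$ exactly when the chosen $v_{i_0}$ lies in $I_{i_0}\setminus\{j_0\}$. Since $F_e$ is the convex hull of the topes it contains, it suffices to produce $c\in\ZZ$ with $\sum_{j\in I_e}x_j=c$ at every such tope; note that $d\notin I_e$ by the definition of $I_e$, so $I_e\subseteq[d-1]$ and $\sum_{j\in I_e}x_j$ is an honest linear (not merely affine) functional on $\RR^{d-1}$.

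For the computation, remove $e$ from $T$ and let $C$ be the connected component of $T\setminus e$ not containing the right vertex $d$, so that $I_e$ is precisely the set of right vertices lying in $C$. First observe $I_e\neq\emptyset$: the two components of $T\setminus e$ are the one containing $i_0$ and the one containing $j_0$; if $C$ is the former it contains a neighbor of $i_0$ other than $j_0$ (there is one, as $\deg_T i_0\ge 2$), and if $C$ is the latter it contains $j_0$ — in either case $C$ has a right vertex. The key point is that for every left vertex $i\neq i_0$ all edges of $T$ at $i$ survive in $T\setminus e$, so $I_i$ sits entirely among the right vertices of the component of $i$; hence $I_i\subseteq I_e$ when $i\in C$ and $I_i\cap I_e=\emptyset$ when $i\notin C$. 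Likewise $I_{i_0}\setminus\{j_0\}$ lies in the component of $i_0$. Now evaluate $\ell(x):=\sum_{j\in I_e}x_j=\#\{i\in[n]:v_i\in I_e\}$ at a tope $x$ of $F_e$, where $v_{i_0}\in I_{i_0}\setminus\{j_0\}$: for $i\neq i_0$ we have $v_i\in I_e$ iff $i\in C$, and for $i=i_0$ we have $v_{i_0}\in I_e$ iff $i_0\in C$. Thus $\{i:v_i\in I_e\}$ is exactly the set of left vertices of $C$, so $\ell(x)=|C\cap[n]|=:c$, independent of the tope.

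It then follows that $\sum_{j\in I_e}x_j$ is identically equal to the integer $c=|C\cap[n]|$ on all vertices of $F_e$, hence on $F_e$. Since $I_e\neq\emptyset$, the locus $\{x\in\RR^{d-1}:\sum_{j\in I_e}x_j=c\}$ is a hyperplane, and as it contains the $(d-2)$-dimensional facet $F_e$ it must be its affine hull, giving the stated equation. The one place that needs care — and the step I expect to be the main obstacle to write cleanly rather than conceptually hard — is the bookkeeping in the second paragraph: one must check that the count $\#\{i:v_i\in I_e\}$ really does not depend on which representative $v_i\in I_i$ is picked, which works because each $I_i$ with $i\neq i_0$ sits wholly on one side of the cut induced by $e$ and the single ambiguous index $i_0$ is pinned by the constraint $v_{i_0}\neq j_0$ defining $F_e$; together with the initial reduction to the projected coordinates, i.e.\ the observation that $d\notin I_e$.
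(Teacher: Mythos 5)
Your proof is correct and follows essentially the same route as the paper's: both arguments rest on the observation that deleting $e$ splits $T$ into two components so that each coordinate set $I_i$ (resp.\ $I_{i_0}\setminus\{j_0\}$) lies entirely inside $I_e$ or entirely in its complement, whence the functional $\sum_{j\in I_e}x_j$ is constant on $F_e$. The only difference is presentational: the paper checks constancy by writing differences of points of $F_e$ in barycentric coordinates, while you evaluate at the vertices $e_{v_1}+\cdots+e_{v_n}$, which in addition identifies the constant explicitly as $c=|C\cap[n]|$ and makes the integrality and nondegeneracy ($d\notin I_e$, $I_e\neq\emptyset$) checks explicit.
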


\begin{proof}
Let us denote the type of $F_e$ by $(J_1,\dots,J_n)$. Without loss of generality, we can assume that $J_k$'s are ordered so that $\cup_{k=1}^m J_k = I_e$ and $\cup_{k=m+1}^n J_k = [d]\setminus I_e$. 


Denoting the coordinate vectors as $e_j$'s, any vector lying on $F_e$ is of the form 
$$\sum_{k=1}^m \sum_{j\in J_k} (\lambda_{k,j}-\lambda'_{k,j}) e_j + \sum_{k=m+1}^n \sum_{j\in J_k\setminus d} (\lambda_{k,j}-\lambda'_{k,j}) e_j$$
where $\sum_{j\in J_k} \lambda_{k,j}= \sum_{j\in J_k} \lambda'_{k,j}=1$ for all $k\le m$. By   construction, $e_j$'s appearing in the first summation are precisely those for $j\in I_e$, and $e_j$'s appearing in the second summation are those for $j\notin I_e$. Therefore $F_e$ is clearly orthogonal to $(n_1,\dots,n_{d-1})$, where $n_i = 1$ if $i \in I_e$ and $0$ otherwise.
\end{proof}


\begin{corollary}
\label{cor:int}
A matrix $Y$ defining a fine mixed cell $T$ in $n\Delta_{d-1}$ is totally unimodular.
\end{corollary}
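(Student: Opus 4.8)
The plan is to combine the preceding Lemma with a classical fact about laminar families. After projecting to $n\Delta_{d-1}'\subseteq\RR^{d-1}$ we may write $T=\{x\in\RR^{d-1}: Yx\le b\}$, where the rows of $Y$ are the outer facet normals of $T$. By the preceding Lemma each facet of $T$ is of the form $F_e=\{x:\ \sum_{j\in I_e}x_j=c_e\}$ for an edge $e$ of the spanning tree $T$ that is not incident to a left leaf, where $I_e\subseteq[d-1]$ is the set of right vertices separated from the vertex $d$ when $e$ is deleted, and $c_e\in\ZZ$. A dimension count via Lemma~\ref{lem:product} finishes the identification of the facets: since $T\cong\Delta_{I_1}\times\cdots\times\Delta_{I_n}$, its number of facets is $\sum_{i:|I_i|\ge2}|I_i|$, which equals the number of edges of the spanning tree not incident to a left leaf, so the assignment $e\mapsto F_e$ is onto the facets of $T$. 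Hence every row of $Y$ equals $\pm\mathbf{1}_{I_e}$ for such an $e$.

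Next I would show that the family $\{I_e\}$ is laminar. Root the spanning tree $T$ at the right vertex $d$; then $I_e$ is precisely the set of right-vertex descendants of the endpoint of $e$ lying farther from $d$. Since descendant sets in a rooted tree are pairwise either nested or disjoint, and this is preserved on restricting to right vertices, $\{I_e\}$ is laminar (each $I_e$ is nonempty: an edge not incident to a left leaf cuts off a subtree with at least two vertices, hence containing a right vertex, necessarily $\ne d$). A laminar family on a finite ground set admits a linear order of that set under which every member is an interval --- take a depth-first order on the containment forest. Reordering the columns of $Y$ by such an order on $[d-1]$ makes every row of $Y$ the $\pm$ indicator vector of a block of consecutive columns, i.e. $Y$ becomes, row by row, $\pm$ an interval matrix.

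Finally, total unimodularity is preserved under permuting columns and under negating rows, so the claim follows from Lemma~\ref{lem:int}. The only substantive step is the laminarity of $\{I_e\}$, for which the key observation is to root $T$ at $d$ and recognize $I_e$ as a set of descendants; everything else --- the facet count, the interval ordering of a laminar family, and the invariance of total unimodularity under column permutation and row sign changes --- is routine. I expect that laminarity observation to be where the real content lies.
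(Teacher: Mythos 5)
Your proof is correct and follows essentially the same route as the paper: identify the facet normals as indicator vectors $\mathbf{1}_{I_e}$ via the preceding lemma, show the supports $\{I_e\}$ form a laminar family, reorder columns to obtain an interval matrix, and invoke Lemma~\ref{lem:int}. The only difference is cosmetic --- you prove laminarity by rooting the tree at $d$ and viewing each $I_e$ as a descendant set, while the paper argues that incomparable, non-disjoint supports would force a cycle of length at least $4$ in the tree; your explicit facet count is a small added check the paper leaves implicit.
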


\begin{proof}
From the way the projection was defined, it is enough to show that the matrix $Y'$ defining a cell in $n\Delta_{d-1}'$ is totally unimodular. If there are two rows in $Y'$ such that their support sets are incomparable, but not disjoint, the previous lemma tells us that there is a cycle in $T$ of length $\geq 4$. So the support sets of any pair of rows are either comparable or disjoint. After some reordering of the columns, this becomes an interval matrix. Lemma~\ref{lem:int} implies that $Y'$ is totally unimodular.
\end{proof}

Using this, we are going to show that when $\T_{\alpha}^*$ is a length $2$ simplex, $\Q_{\alpha}^*$ is connected.

\begin{lemma}
\label{lem:qconn}
Let $A$ and $B$ be two cells in $\Q_{\alpha}^*$ such that $\alpha = (a_1,\cdots,a_n)$ and $\sum a_i = n-2$. Then there is a path in $\Q_{\alpha}^*$ from $A$ to $B$, consisting of cells and their facets.
\end{lemma}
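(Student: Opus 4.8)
I would deduce the lemma from a geometric dictionary identifying $\Q_\alpha^*$ with the part of the dual subdivision that reaches into a fixed convex polytope. Set $R_\alpha := \{x \in d\Delta_{n-1} : x_i \ge a_i \text{ for all } i \in [n]\}$, a convex sub-polytope of the hyperplane $H := \{x : \sum_i x_i = d\}$; the hypothesis that $\T_\alpha^*$ is a length-$2$ simplex guarantees that $R_\alpha$ is a full-dimensional simplex in $H$, so $\mathrm{int}(R_\alpha) \ne \emptyset$ (interior taken relative to $H$). The dictionary I want is: \emph{a fine cell $T$ of the dual subdivision lies in $\Q_\alpha^*$ if and only if $T \cap \mathrm{int}(R_\alpha) \ne \emptyset$.} Granting it, the lemma follows from the standard fact that the dual graph of a polyhedral subdivision of a convex polytope is connected.

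To establish the dictionary, note that $T \in \Q_\alpha^*$ means $\deg_T(i) > a_i$ for all $i \in [n]$, i.e. $rd(T) \ge \alpha$ coordinatewise. If $rd(T) \ge \alpha$, then by Lemma~\ref{lem:unit} the unique unit simplex $\sigma_T \subseteq T$ has location $rd(T)$, so its (nonempty) relative interior consists of points $x$ with $x_i > rd(T)_i \ge a_i$ for all $i$; hence $\emptyset \ne \mathrm{int}(\sigma_T) \subseteq T \cap \mathrm{int}(R_\alpha)$. Conversely, given $p \in T \cap \mathrm{int}(R_\alpha)$, use Lemma~\ref{lem:product} to write $T = \Delta_{S_1} + \cdots + \Delta_{S_d}$ and $p = \sum_l p^{(l)}$ with $p^{(l)} \in \Delta_{S_l}$; then $p_i = \sum_l p^{(l)}_i \le |\{l : i \in S_l\}| = \deg_T(i)$, and since $a_i < p_i$ with $a_i, \deg_T(i) \in \ZZ$ we get $a_i \le \deg_T(i) - 1 = rd(T)_i$ for all $i$, so $T \in \Q_\alpha^*$.

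For the lemma itself, I would first reduce to joining two \emph{fine} cells. Any $A \in \Q_\alpha^*$ is a face of some fine cell $\widehat A$, and since enlarging a subgraph cannot decrease a right-degree, $rd(\widehat A) \ge rd(A) \ge \alpha$, so $\widehat A \in \Q_\alpha^*$; the same bound shows every term of a chain of faces $A = F_0 \subsetneq F_1 \subsetneq \cdots \subsetneq F_c = \widehat A$ (each $F_l$ a facet of $F_{l+1}$) lies in $\Q_\alpha^*$, so this chain is a path in $\Q_\alpha^*$ from $A$ to $\widehat A$, and similarly from $\widehat B$ to $B$. So suppose $A = T$ and $B = T'$ are fine cells of $\Q_\alpha^*$; by the dictionary pick $p \in T \cap \mathrm{int}(R_\alpha)$ and $q \in T' \cap \mathrm{int}(R_\alpha)$, and note $[p,q] \subseteq \mathrm{int}(R_\alpha)$ by convexity. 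After a generic perturbation of $p,q$ and of the segment so that it avoids the finitely many affine spans of codimension-$\ge 2$ cells of the subdivision, the segment meets only relative interiors of fine cells and of facets, giving a chain $T = T^{(0)}, F^{(1)}, T^{(1)}, \dots, F^{(r)}, T^{(r)} = T'$ with each $T^{(s)}$ a fine cell and each $F^{(s)}$ a facet of both $T^{(s-1)}$ and $T^{(s)}$. Each $T^{(s)}$ meets $\mathrm{int}(R_\alpha)$, hence lies in $\Q_\alpha^*$ by the dictionary; and writing $F^{(s)} = T^{(s-1)} \setminus e = T^{(s)} \setminus e'$, the edges $e$ and $e'$ have distinct right-endpoints (else $rd(T^{(s-1)}) = rd(T^{(s)})$, contradicting injectivity of the RDV map in Proposition~\ref{prop:Gcell}), so a short coordinatewise comparison gives $rd(F^{(s)}) \ge \alpha$ and $F^{(s)} \in \Q_\alpha^*$. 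Concatenating this segment-path with the paths from $A$ to $\widehat A$ and from $\widehat B$ to $B$ finishes the argument.

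The part I expect to be the main obstacle is getting the dictionary exactly right, since it is what converts the combinatorial membership condition $rd(T) \ge \alpha$ into the geometric statement that $T$ pokes into the convex body $R_\alpha$; after that the path is produced by a routine generic-line argument. (The length-$2$ hypothesis enters only to make $R_\alpha$ full-dimensional; alternatively, one can run the argument using the totally unimodular inequality descriptions of the fine cells supplied by Corollary~\ref{cor:int}, staying entirely within the lattice.)
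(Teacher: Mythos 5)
Your proof is correct, but it takes a genuinely different route from the paper's. The paper works at the level of the lattice points (topes) of $\T_{\alpha}^*$: it invokes total unimodularity (Corollary~\ref{cor:int} with Theorem~\ref{thm:int}) to show that any cell containing a tope of $\T_{\alpha}^*$ and meeting its interior intersects $\T_{\alpha}^*$ in a full-dimensional integer polytope, hence contains enough topes off each $i$-facet to lie in $\Q_{\alpha}^*$; it then uses the length-$2$ hypothesis to guarantee that the unit simplices of $A$ and $B$ share a tope of $\T_{\alpha}^*$, and connects $A$ to $B$ by a short walk of cells and facets around that common tope. You instead prove an equivalence --- a fine cell $T$ lies in $\Q_{\alpha}^*$ if and only if it meets $\mathrm{int}(R_{\alpha})$ --- using Lemma~\ref{lem:unit} for one direction and the elementary Minkowski-sum bound $p_i \le \deg_T(i)$ for the other, and then join any two such cells by a generic segment inside the convex region $\mathrm{int}(R_{\alpha})$; your facet bookkeeping via injectivity of the RDV map (Proposition~\ref{prop:Gcell}) makes explicit a point the paper leaves implicit. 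The trade-off: your argument bypasses the integer-programming machinery entirely and, as you observe, never really uses the length-$2$ hypothesis --- only that $R_{\alpha}$ has nonempty interior --- so it would yield Corollary~\ref{cor:qconn} in one step rather than by repeated application of the lemma; the cost is a routine but fussier general-position argument in place of the paper's purely tope-level reasoning. Two minor remarks: your opening reduction from arbitrary faces to fine cells is superfluous (the $A,B$ of the lemma are already cells, and a chain of lower-dimensional faces is not literally a path of cells and their facets), and you were right to read the hypothesis as ``$\T_{\alpha}^*$ is a length-$2$ simplex,'' which is the condition the surrounding text actually uses (the displayed condition $\sum a_i = n-2$ looks like an index slip for $d-2$).
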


\begin{proof}

Any tope in $\T_{\alpha}^*$ contains at least $a_i$ number of $i$'s. And any tope that is not on the $i$-facet of $\T_{\alpha}$ contains at least $a_i+1$ number of $i$'s. Now choose any tope $C$ in $\T_{\alpha}^*$. Let $T$ be a cell that contains $C$ and intersects with the interior of $\T_{\alpha}^*$. 

We can view $T \cap \T_{\alpha}^*$ as the solution space of inequalities defining the cell $T$ and inequalities of the form $x_i \geq a_i \in \ZZ$. If we rewrite these inequalities in terms of $Yx \leq b$, then $b$ is an integer vector. And $Y$ is a totally unimodular matrix due to Corollary~\ref{cor:int}. We know that this intersection is non-empty, full-dimensional and bounded by $\T_{\alpha}$. Theorem~\ref{thm:int} tells us that the solution space is a full-dimensional integer polytope. Hence $T$ contains at least $d$ topes of $\T_{\alpha}^*$ such that for each $i$, there is at least one tope not on the $i$-facet of $\T_{\alpha}^*$. If some tope of $T$ contains $k$ number of $i$'s then $T$ also contains at least $k$ number of $i$'s. So $T$ is in $\Q_{\alpha}^*$. 

Now let $A$ and $B$ be any two cells of $\Q_{\alpha}^*$. They share at least one tope in $\T_{\alpha}^*$. We can draw a path near this tope inside $\T_{\alpha}^*$ that starts at $A$, ends at $B$ and goes through only the cells and their facets. From what we proved just before, all cells that this path goes through are cells of $\Q_{\alpha}^*$.
\end{proof}

\begin{corollary}
\label{cor:qconn}
Pick any $\alpha = (a_1,\cdots,a_n)$ and let $A$ and $B$ be two types in $\Q_{\alpha}$. Then there is a path connecting them.
\end{corollary}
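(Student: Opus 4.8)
The plan is to reduce everything to a statement about cells in the \emph{dual} subdivision, identify the relevant family of cells with those that meet the interior of $\T_\alpha^*$, and then get connectivity for free from convexity of $\T_\alpha^*$; this is exactly the argument in the proof of Lemma~\ref{lem:qconn}, with the one place that used ``edge length $2$'' replaced by a length‑independent observation.

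First I would reduce to cells and pass to the dual. We may assume $\alpha\in\ZZ_{\ge 0}^n$ (replacing $a_i$ by $\max(a_i,0)$ changes nothing) and $\Q_\alpha\neq\emptyset$. If $A\in\Q_\alpha$ is any type, choose a cell $\widehat A$ having $A$ as a face; by Remark~\ref{rem:subdivsurrounding} every subgraph $(J_1,\dots,J_n)$ with $A_i\subseteq J_i\subseteq\widehat A_i$ is again a face, and since $|J_i|\ge|A_i|>a_i$ it lies in $\Q_\alpha$, so adding the missing edges one at a time gives a path of adjacent types inside $\Q_\alpha$ from $A$ to $\widehat A$. Hence it suffices to connect two \emph{cells} $A,B\in\Q_\alpha$. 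Since a cell is a spanning tree it uses every right vertex, so it can be dualized; by Remark~\ref{rem:dual} it is then equivalent to connect $A^*,B^*\in\Q_\alpha^*$ by a path of cells and their facets. If $\T_\alpha^*$ is not full dimensional (that is, $\sum a_i=d-1$), then by Proposition~\ref{prop:Gcell} $\Q_\alpha^*$ is the single cell with $rd$ equal to $\alpha$ and there is nothing to prove, so I assume $\T_\alpha^*$ is full dimensional.

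Next I would reprove, verbatim from the first half of the proof of Lemma~\ref{lem:qconn} (which nowhere used that $\T_\alpha^*$ has edge length $2$), that $\Q_\alpha^*=\{\,T^*: T^*\cap\T_\alpha^*\text{ is full dimensional}\,\}$. Indeed, if a cell $T^*$ meets $\operatorname{int}\T_\alpha^*$, then $T^*\cap\T_\alpha^*$ is cut out by the inequalities of $T^*$ (totally unimodular by Corollary~\ref{cor:int}) together with the integer inequalities defining $\T_\alpha^*$, so by Theorem~\ref{thm:int} it is a full‑dimensional integral polytope; for each facet of $\T_\alpha^*$ it then has a vertex off that facet, i.e.\ a tope with at least $a_i+1$ copies of $i$, so $T^*$ itself has at least $a_i+1$ copies of $i$ for every $i$ and lies in $\Q_\alpha^*$. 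Conversely every $T^*\in\Q_\alpha^*$ contains its unit simplex, whose location $rd(T^*)\ge\alpha$ by Lemma~\ref{lem:unit}, so that unit simplex, hence $T^*$, meets $\operatorname{int}\T_\alpha^*$.

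Finally I would conclude by convexity. The polytopes $\{T^*\cap\T_\alpha^*: T^*\text{ a cell}\}$ and their faces form a polyhedral subdivision of the convex polytope $\T_\alpha^*$ (one is intersecting the mixed subdivision of $d\Delta_{n-1}$ with the half‑spaces defining $\T_\alpha^*$), whose maximal cells are, by the previous step, exactly the $T^*\cap\T_\alpha^*$ with $T^*\in\Q_\alpha^*$. The dual graph of a subdivision of a convex polytope is connected: a generic segment from an interior point of $A^*\cap\T_\alpha^*$ to one of $B^*\cap\T_\alpha^*$ meets a chain of maximal cells in which consecutive ones share a facet, and if $T_1^*\cap\T_\alpha^*$ and $T_2^*\cap\T_\alpha^*$ share a facet $G$ then $T_1^*\cap T_2^*$ is a common face of $T_1^*$ and $T_2^*$ containing the $(n-2)$‑dimensional set $G$, hence a common facet of $T_1^*$ and $T_2^*$ in the mixed subdivision. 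So consecutive cells of the chain are adjacent in $\Q_\alpha^*$, giving a path of cells and facets from $A^*$ to $B^*$; by Remark~\ref{rem:dual} this becomes such a path in $\Q_\alpha$ from $A$ to $B$, and splicing in the two paths from the first step connects the original types. The one step that needs care is this last one: pinning down that $\T_\alpha^*$ really is a convex polytope, that restricting the mixed subdivision to it is again a polyhedral subdivision, and that adjacency there lifts to adjacency of cells in $d\Delta_{n-1}$. Everything else is routine or already contained in the proof of Lemma~\ref{lem:qconn}.
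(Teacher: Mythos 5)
Your proposal is correct, and it reaches the conclusion by a route that differs from the paper's at the decisive step. The paper's own proof has the same skeleton as yours up to the midpoint: reduce a type to a cell containing it via the surrounding property, note that a walk through cells and common facets is a path of adjacent types in $\Q_\alpha$, and dualize via Remark~\ref{rem:dual}. But it then finishes by ``repeatedly using Lemma~\ref{lem:qconn}'', i.e.\ it only ever invokes the edge-length-$2$ case, leaving implicit the chain of intermediate ranks $\alpha'\ge\alpha$ (with $\T_{\alpha'}^*$ of edge length $2$) along which the lemma must be applied to link two far-apart cells of $\Q_\alpha^*$. You instead prove the general-$\alpha$ statement in one shot: you correctly observe that the first half of the proof of Lemma~\ref{lem:qconn} (the total unimodularity of Corollary~\ref{cor:int} plus Theorem~\ref{thm:int}, giving that any cell meeting $\operatorname{int}\T_\alpha^*$ has a full-dimensional integral intersection with $\T_\alpha^*$ and hence lies in $\Q_\alpha^*$, while conversely Lemma~\ref{lem:unit} puts the unit simplex of every cell of $\Q_\alpha^*$ inside $\T_\alpha^*$) never uses the length-$2$ hypothesis, and you then replace the paper's ``two cells share a tope of a length-$2$ simplex'' argument by connectivity of the dual graph of the subdivision induced on the convex simplex $\T_\alpha^*$, via a generic segment. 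This buys a cleaner and more self-contained argument: it makes the ``repeatedly'' step unnecessary, and in fact subsumes Lemma~\ref{lem:qconn} entirely; the price is that you must (and do) check that restricting the mixed subdivision to $\T_\alpha^*$ is again a polyhedral subdivision and that shared facets there lift to shared facets of cells in $d\Delta_{n-1}$, both of which are standard. Two cosmetic points: when $\sum a_i=d-1$ the simplex $\T_\alpha^*$ is a single unit simplex and is still full dimensional (the genuinely degenerate case $\sum a_i\ge d$ makes $\Q_\alpha$ empty and the claim vacuous), so your case label is slightly off even though your treatment of it (a unique cell, by Proposition~\ref{prop:Gcell}) is correct; and, like the paper, you take for granted that the common facet of two adjacent cells of $\Q_\alpha$ again lies in $\Q_\alpha$ --- this is true (the two edges exchanged by adjacent cells cannot lie in the same coordinate), but neither you nor the paper spells it out, so it is not a gap relative to the paper's own standard of rigor.
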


\begin{proof}

Let $A,B$ be cells that are adjacent (i.e. sharing a common facet). If they are both in $\Q_\alpha$, then their common facet $(A_1\cap B_1,\dots,A_n\cap B_n)$ is also in $\Q_\alpha$. And for any type in $\Q_\alpha$, a cell that contains it is also in $\Q_\alpha$. Therefore by the surrounding property, it suffices to prove that the cells in $\Q_\alpha$ are connected by their common facets, so that the walk through adjacent facets connects every cell in $\Q_\alpha$. And by Remark~\ref{rem:dual}, it is enough to prove the existence of such walk between two cells in $\Q_{\alpha}^*$. This follows from repeatedly using Lemma~\ref{lem:qconn}.

\end{proof}

Now we are ready to prove that elimination holds.

\begin{proposition}
\label{prop:elim}
Elimination property holds for $\M$, a collection of trees coming from a fine mixed subdivision of $n\Delta_{d-1}$.
\end{proposition}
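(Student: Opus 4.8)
The plan is to use Lemma~\ref{lem:pathconvert} to reduce the entire statement to the following: between any two types $A,B\in\M$ there exists a \emph{strong} path. I would prove this by induction on the quantity $\delta(A,B):=\sum_{i\in[n]}\bigl((d-1)-r_i\bigr)$, where $(r_1,\dots,r_n)=rk(A,B)$; call it the rank deficiency. It is a nonnegative integer, and $\delta(A,B)=0$ forces $A_i=B_i=[d]$ for every $i$, so $A=B$ and the empty path works. So assume $\delta(A,B)>0$ and that strong paths exist for all pairs of strictly smaller rank deficiency.

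Put $\alpha=rk(A,B)$. Since $|A_i|\ge r_i+1>r_i$ and $|B_i|\ge r_i+1>r_i$ for all $i$, both $A$ and $B$ lie in $\Q_\alpha$, so Corollary~\ref{cor:qconn} gives a path between them inside $\Q_\alpha$; fix one, $P\colon A=C^0\to C^1\to\cdots\to C^q=B$, of minimal length. Minimality should guarantee that in each coordinate $i$ every element is added at most once and deleted at most once and never both, whence $A_i\cap B_i\subseteq C^t_i\subseteq A_i\cup B_i$ for all $t$; so the only way $P$ can fail to be strong is for some cardinality sequence $t\mapsto|C^t_i|$ to fail to be unimodal. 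As $P$ stays in $\Q_\alpha$, that sequence never drops to $r_i$ or below, so a failure of unimodality in coordinate $i$ produces indices $s<u<t$ with $|C^s_i|=|C^t_i|\ge r_i+2$ and $|C^u_i|<|C^s_i|$ --- an isolated downward excursion in coordinate $i$ (coordinate $i$ unimodal outside $[s,t]$).

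Now compare $(C^s,C^t)$ with $(A,B)$. In coordinate $i$ we have $\min(|C^s_i|,|C^t_i|)-1\ge r_i+1>r_i$, and in every other coordinate $k$ the containment $C^s,C^t\in\Q_\alpha$ gives $\min(|C^s_k|,|C^t_k|)-1\ge r_k$. Hence $rk(C^s,C^t)\ge\alpha$ with a strict gain in coordinate $i$, so $\delta(C^s,C^t)<\delta(A,B)$ and the induction hypothesis supplies a strong path $Q$ from $C^s$ to $C^t$. Being strong, $Q$ keeps its $k$-th coordinate at cardinality $\ge\min(|C^s_k|,|C^t_k|)>r_k$ for every $k$, so $Q\subseteq\Q_\alpha$. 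Splicing $Q$ into $P$ in place of $C^s\to\cdots\to C^t$ yields a path $P'\subseteq\Q_\alpha$ from $A$ to $B$ with the offending excursion removed, without increasing length. Iterating this surgery, with (say) the total number of downward excursions of the current path over all coordinates as a quantity that strictly drops at each step, I reach a strong path between $A$ and $B$ after finitely many steps, and Lemma~\ref{lem:pathconvert} then gives elimination for $\M$. (A more geometric reading of one surgery: when $A_i=B_i$, the excursion is precisely the path leaving the ``$A_i$-region'' of the $i$-th pseudo-hyperplane and returning, and one refines $Q$ back down to $A_i$ in that coordinate via the surrounding property, cf.\ Remark~\ref{rem:subdivsurrounding} and Lemma~\ref{lem:surrounding}.)

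The main obstacle I anticipate is making this surgery bookkeeping airtight: confirming that a minimal-length path in $\Q_\alpha$ really never reuses an element within a coordinate (so that non-strongness is exactly non-unimodality); choosing the excursion $[s,t]$ so that pasting $Q$ in does not introduce new non-unimodalities at the two splice points in coordinates $\ne i$; and pinning down a genuinely monotone termination measure for the inner loop of surgeries. Everything else --- the reduction via Lemma~\ref{lem:pathconvert}, membership in $\Q_\alpha$, the rank-deficiency drop, and $Q\subseteq\Q_\alpha$ --- is routine once that framework is in place.
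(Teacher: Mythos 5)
Your frame is the same as the paper's (reduce to the existence of a strong path via Lemma~\ref{lem:pathconvert}, work inside $\Q_{rk(A,B)}$ using Corollary~\ref{cor:qconn}, and splice in strong paths supplied by induction on pairs of strictly larger rank), but the pivotal step is missing, and it is exactly the step you flag as an ``anticipated obstacle.'' The assertion that a minimal-length path in $\Q_\alpha$ never adds and later deletes (or deletes and later re-adds) an element in a coordinate, hence satisfies $A_i\cap B_i\subseteq C^t_i\subseteq A_i\cup B_i$, is not a formal consequence of minimality: the obvious shortcut that omits an added-then-deleted element $q$ from the intermediate types drops the cardinality of that coordinate by one, which may violate the defining bound of $\Q_\alpha$ (or even empty the coordinate), so the shortcut need not stay in $\Q_\alpha$. ``No reuse'' does follow once one knows the shortest path in $\Q_{rk(A,B)}$ has length exactly $\Delta(A,B)$ --- but that is essentially equivalent to what is being proved, so as stated the claim is circular. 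This matters because the failure mode it sweeps away is invisible to your surgery: if some $q\notin A_i\cup B_i$ is added and later deleted, the cardinality sequence in coordinate $i$ can remain perfectly unimodal, so no ``downward excursion'' exists, your surgery never fires, yet the path is not strong. The paper handles precisely this case (its case 2) by introducing a second induction parameter, the length $l_{A,B}$ of a shortest path in $\Q_{rk(A,B)}$: it first replaces the tail $A'\to\cdots\to B$ by a strong path (induction on rank, then length), and the unimodality of the strong tail is what guarantees that deleting $q$ from the intermediate types keeps them in $\Q_{rk(A,B)}$, contradicting minimality; a similar rearrangement disposes of the case where an element of an incomparable coordinate is deleted first (its case 3). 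Your single induction on rank deficiency has no substitute for this mechanism.

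The second gap is termination of the surgery loop. Splicing the strong path $Q$ from $C^s$ to $C^t$ reorders all the moves in coordinates $k\neq i$ inside that window, and nothing prevents this from creating new downward excursions (or new element reuse) at the splice points; so ``total number of downward excursions'' is not shown to be a decreasing measure, and no alternative measure is given. In the paper both issues are resolved simultaneously by the double induction together with the case analysis on the single first step $A\to A'$ of a shortest path: each problematic configuration either contradicts minimal length or is reduced to a pair with strictly larger rank or strictly smaller $l$. To repair your argument you would need either to prove the no-reuse claim for shortest paths in $\Q_\alpha$ directly (which runs into the same $\Q_\alpha$-membership obstruction), or to add a length-type secondary parameter as the paper does --- at which point the proof becomes the paper's.
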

 
\begin{proof}
Let us dedicate $l_{A,B}$ to be the length of a shortest path between $A$ and $B$ in $\Q_{rk(A,B)}$. It is well defined by Corollary~\ref{cor:qconn}. We are going to show that there is a strong path between $A$ and $B$ by induction, decreasing  $rk(A,B)$ and then increasing $l_{A,B}$. 

When $rk(A,B)$ is maximal (i.e. $\sum_i rk(A,B)_i = d-1$), $A$ and $B$ have to be spanning trees. Since Proposition~\ref{prop:Gcell} tells us that $A=B$, the claim is obvious in this case. The claim is also obvious when $l_{A,B}=0$, since $\Delta(A,B) \leq l_{A,B}$.  So assume for the sake of induction, that we know there is a strong path between any pair $D,E$ such that 
\begin{itemize}
\item $rk(D,E) > rk(A,B)$ or
\item $rk(D,E) = rk(A,B)$ and $l_{D,E} < l_{A,B}$.
\end{itemize}

Let $A = C^0 \rightarrow A' = C^1 \rightarrow \cdots \rightarrow C^{l_{A,B}} = B$ be a shortest path between $A$ and $B$ in $\Q_{rk(A,B)}$. Notice that $A' \in \Q_{rk(A,B)}$ implies $rk(A',B) \geq rk(A,B)$. Then the induction hypothesis tells us that there is a strong path between $A'$ and $B$. Replace $A' \rightarrow \cdots \rightarrow B$ with the strong path between $A'$ and $B$, then we still get a shortest path between $A$ and $B$ in $\Q_{rk(A,B)}$. Now we are going to do a case-by-case analysis on how $A \rightarrow A'$ looks like.

\begin{enumerate}
\item If an element of $B_i \setminus A_i$ is added to the $i$-th coordinate, or if $A_i \supset B_i$ and an element of $A_i \setminus B_i$ is deleted from $i$-th coordinate, then this path is a strong path between $A$ and $B$.

\item Consider the case when some element $q \not \in B_i \setminus A_i$ is added to the $i$-th coordinate. We are going to show that this case cannot happen. Let $C^t \rightarrow C^{t+1}$ be the first pair of types where $q$ gets deleted from the $i$-th coordinate. Look at the path $A'=C^1 \rightarrow \cdots \rightarrow C^t$. Any type $C$ among this path should satisfy $|C_i| \geq min(|C_i^1|,|C_i^t|) > min(|A_i|,|B_i|)$. Even after we delete $q$ from the $i$-th coordinate for all types in this path, they are still in $\Q_{rk(A,B)}$. So we may replace $A' \rightarrow \cdots \rightarrow C^t$ with a path in $\Q_{rk(A,B)}$ that is strictly shorter. We get a contradiction since $A \rightarrow \cdots \rightarrow B$ is a shortest path between $A$ and $B$ in $\Q_{rk(A,B)}$.

\item The remaining case is when some element $q$ is deleted from the $i$-th coordinate where $A_i \not \supset B_i$. We are going to show that we may ignore this case. Let $C^t \rightarrow C^{t+1}$ be the first pair of types where some element $q'$ gets added to the $i$-th coordinate. Such $t$ exists since $A_i \not \supset B_i$. Notice that $C^{t+1} \in \Q_{rk(A,B)}$ implies $rk(A,C^{t+1}) \geq rk(A,B)$. Then induction hypothesis tells us that we have a strong path between $A$ and $C^{t+1}$. We can replace $A \rightarrow \cdots \rightarrow C^{t+1}$ with this strong path between $A$ and $C^{t+1}$. Then we get a path $A \rightarrow A' \rightarrow \cdots \rightarrow B$ that is a shortest path between $A$ and $B$ in $\Q_{rk(A,B)}$. As before, replace $A' \rightarrow \cdots \rightarrow B$ with a strong path between $A'$ and $B$, then we get a path that falls into one of the previous cases.
\end{enumerate}

So induction tells us that the claim is true.

\end{proof}  

We will roughly sketch how the process works. Let's assume that when going from $A$ to $A'$, the $i$-th coordinate changed. If the $i$-th coordinate of the path changes like
$$ 123 \rightarrow 1235 \rightarrow \cdots \rightarrow 14, $$

induction hypothesis on the length tells us that $A' \rightarrow \cdots \rightarrow B$ can be replaced with a strong path of same length. So now the $i$-th coordinate of the path changes like
$$ 123 \rightarrow 1235 \rightarrow 12345 \rightarrow 1245 \rightarrow 124 \rightarrow 14. $$

Using the surrounding property, we can get
$$ 123 \rightarrow 123 \rightarrow 1234 \rightarrow 124 \rightarrow 124 \rightarrow 14. $$

Then we get a redundant type in this path, so it is not a shortest-length path.

If the path changes like
$$ 123 \rightarrow 23 \rightarrow \cdots \rightarrow 14,$$

induction hypothesis tells us that $A' \rightarrow \cdots \rightarrow B$ can be replaced with a strong path of same length. So now the path changes like
$$ 123 \rightarrow 23 \rightarrow 234 \rightarrow 1234 \rightarrow 134 \rightarrow 14. $$

Induction hypothesis on the length tells us there is a strong path between $123$ and $234$, and we can replace this part to get
$$ 123 \rightarrow 1234 \rightarrow 234 \rightarrow 1234 \rightarrow 134 \rightarrow 14. $$

So for proof purposes, we could ignore the case when an element in an incomparable coordinate was deleted going from $A$ to $A'$.

\begin{corollary}
\label{cor:tom}
Given a collection of all trees in a triangulation of $\Delta_{n-1} \times \Delta_{d-1}$, it forms a tropical oriented matroid.

\end{corollary}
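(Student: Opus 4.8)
The plan is to present the given triangulation of $\Delta_{n-1}\times\Delta_{d-1}$, via the Cayley trick, as a fine mixed subdivision $\Sigma$ of $n\Delta_{d-1}$, to let $\M$ be the set of all faces of $\Sigma$ — each written, as in Remark~\ref{rem:subdivsurrounding}, as a subgraph $(A_1,\dots,A_n)$ of $K_{n,d}$ with every $A_i\neq\emptyset$, i.e. as an $(n,d)$-type — and then to check the four axioms of a tropical oriented matroid for $\M$ one at a time. The point is that Boundary, Surrounding and Comparability are formal consequences of the geometry of $\Sigma$ (together with the characterization theorem of Santos and Ardila), whereas Elimination is precisely Proposition~\ref{prop:elim}; thus the corollary just assembles the work of this section.

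For Boundary: every polytopal subdivision of $n\Delta_{d-1}$ has each of the $d$ vertices of $n\Delta_{d-1}$ among its $0$-dimensional cells, and the vertex corresponding to $j\in[d]$ is the Minkowski cell $\Delta_{\{j\}}+\dots+\Delta_{\{j\}}$, whose subgraph is the type $\mathbf{j}=(j,\dots,j)$, so $\mathbf{j}\in\M$. For Surrounding: if $A\in\M$ is a face of a cell $(I_1,\dots,I_n)$ of $\Sigma$ then $A_i\subseteq I_i$ for all $i$, so by Remark~\ref{rem:subdivsurrounding} any choice of nonempty $A'_i\subseteq A_i$ again gives a face $(A'_1,\dots,A'_n)$ of that same cell, hence a member of $\M$; in particular every refinement of $A$ in the sense of Definition~\ref{def:bad}, which merely shrinks each coordinate, lies in $\M$. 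For Elimination I would invoke Proposition~\ref{prop:elim} directly; this is where all the real work sits, namely the rank induction, the reduction of shortest paths to strong paths, and the connectivity of the regions $\Q_\alpha$ established in Corollary~\ref{cor:qconn}.

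This leaves Comparability, that $CG_{A,B}$ is acyclic for all $A,B\in\M$. When $A=B$ the graph $CG_{A,A}$ has only undirected edges and there is nothing to do; for $A\neq B$ I would reduce to the case of two full-dimensional cells $T\supseteq A$ and $T'\supseteq B$ of $\Sigma$ (chosen so that $A_i\subseteq T_i$ and $B_i\subseteq T'_i$ coordinatewise), and then observe that a directed cycle in the comparability graph of two cells, after contracting the left vertices of $K_{n,d}$, produces — up to reversing all orientations — a directed cycle in $U(T,T')$, contradicting condition (3) of the characterization theorem; acyclicity of the comparability graph of the types of a subdivision of $\Delta_{n-1}\times\Delta_{d-1}$ is in any case already known from the work of Ardila and Develin. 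The fussiest bookkeeping in the whole argument is this reduction to cells and the translation to condition (3) — one must be careful that deleting edges from a comparability graph does not quietly destroy a directed cycle — but it is routine; the genuine obstacle of the paper, the Elimination axiom, has already been overcome in Proposition~\ref{prop:elim}. Having checked all four axioms, $\M$ is a tropical oriented matroid, which is exactly the statement of Corollary~\ref{cor:tom}.
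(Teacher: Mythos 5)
Your proposal is correct and takes essentially the same route as the paper, which states the corollary as the assembly of Proposition~\ref{prop:elim} (Elimination, the only new ingredient) with the Boundary, Surrounding, and Comparability axioms, which hold for every subdivision by Remark~\ref{rem:subdivsurrounding}, Lemma~\ref{lem:surrounding}, and the earlier work of Ardila and Develin. The one fragile spot is your sketched reduction of Comparability to full-dimensional cells (edges directed in $CG_{A,B}$ can become undirected in $CG_{T,T'}$, so the translation to the no-cycle condition on $U(T,T')$ needs care), but your fallback citation to Ardila--Develin, which is what the paper implicitly relies on, makes the argument complete.
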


\section{Further Remarks}


Tropical oriented matroids are in bijection with mixed subdivisions of $n\Delta_{d-1}$. Unimodular oriented matroids are in bijection with mixed subdivisions of a zonotope, where any edge used in the summand is an edge of $\Delta_{d-1}$. There happens to be a natural class of polytopes that contains these two polytopes at the same time, which is called the \newword{generalized permutohedra} \cite{Postnikov01012009}. The trees coming from faces of a fine mixed subdivision of a generalized permutohedra are also $(n,d)$-types, so this suggests that the general framework would be similar.

The surrounding property and the comparability property still hold for generalized permutohedra. In the proof of the elimination property for $n\Delta_{d-1}$ case, all we needed was the connectivity of $\Q_{\alpha}$. And this seems to be a property that generalized permutohedra would also have, since the fact that RDV encodes the position of the cell is still true for generalized permutohedra. Boundary axiom can be modified, in the sense that the boundary topes have to be the vertices defining the convex hull of a generalized permutohedron. One major difference is that all cells of a generalized permutohedron satisfy some property called the \newword{dragon marriage condition}, which is trivial in the $n\Delta_{d-1}$ case. So we add one axiom to properly reflect this condition. Below is our definition of the \newword{generalized tropical oriented matroid}:

\begin{definition}
Let $P=P_G(y_1,\dots,y_n)=y_1\Delta_{I_1}+\dots+y_n\Delta_{I_n}$ be a generalized permutohedron, where $\Delta_{I_i}$'s are faces of $\Delta_{d-1}$ and $y_i\ge 0$ for all $i$. A collection $\mathcal{M_P}$ of $(n,d)$-types is called a \newword{generalized tropical oriented matroid} of $P$ if it satisfies the following conditions:
\begin{itemize}
 \item Boundary : For each vertex $v$ of $P$, there is unique $(n,d)$-type $(\{a_1\},\dots,\{a_n\})$ such that $v = y_1 \Delta_{\{a_1\}} + \cdots + y_n\Delta_{\{a_n\}}$.
 \item Surrounding : Same as tropical oriented matroids.
 \item Comparability : Same as tropical oriented matroids.
 \item Elimination : Same as tropical oriented matroids.
 \item Dragon Marriage : Any cell satisfies the dragon marriage condition.
\end{itemize}
\end{definition}

And our question would be:

\begin{question}
 Given a generalized permutohedron $P$, is there a bijection between the mixed subdivisions of $P$ and generalized tropical oriented matroids $\mathcal{M_P}$?
\end{question}

\bibliographystyle{plain}    
\bibliography{tom}        
 
\end{document}